\documentclass[a4paper,11pt]{amsart}

\usepackage{amsmath, amsthm, amsfonts, amssymb}
\usepackage[bookmarks=false]{hyperref}
\usepackage{enumerate}

\usepackage{color}

\numberwithin{equation}{section}

\setcounter{tocdepth}{1}

\newtheorem{letterthm}{Theorem}

\newtheorem{lettercor}[letterthm]{Corollary}

\newtheorem{thm}{Theorem}[section]
\newtheorem{lem}[thm]{Lemma}

\newtheorem{cor}[thm]{Corollary}
\newtheorem{prop}[thm]{Proposition}

\theoremstyle{definition}
\newtheorem{rem}[thm]{Remark}

\newtheorem{df}[thm]{Definition}

\newtheorem*{question}{Question}

\newcommand{\R}{\mathbf{R}}
\newcommand{\C}{\mathbf{C}}
\newcommand{\Z}{\mathbf{Z}}
\newcommand{\F}{\mathbf{F}}

\newcommand{\N}{\mathbf{N}}
\newcommand{\B}{\mathbf{B}}

\newcommand{\IC}{\mathbb C}

\newcommand{\IN}{\mathbb N}

\newcommand{\cH}{\mathcal{H}}
\newcommand{\cK}{\mathcal{K}}

\newcommand{\cI}{\mathcal{I}}

\newcommand{\fM}{\mathfrak{M}}

\newcommand{\Ad}{\operatorname{Ad}}
\newcommand{\id}{\text{\rm id}}

\newcommand{\Inn}{\operatorname{Inn}}
\newcommand{\Aut}{\operatorname{Aut}}

\newcommand{\rL}{\mathord{\text{\rm L}}}

\newcommand{\conv}{\overline{\mathord{\text{\rm conv}}} \,}

\newcommand{\rE}{\mathord{\text{\rm E}}}

\newcommand{\alg}{\text{alg}}

\newcommand{\ovt}{\mathbin{\overline{\otimes}}}

\newcommand{\ve}{\varepsilon}
\newcommand{\vp}{\varphi}

\newcommand{\ip}[1]{\mathopen{\langle}#1\mathclose{\rangle}}

\newcommand{\op}{^{\mathrm{op}}}

\newcommand{\II}{{\rm II}}
\newcommand{\III}{{\rm III}}

\begin{document}
\title[Full Factors and Co-Amenable Inclusions]{Full Factors and Co-amenable Inclusions}

\author{Jon Bannon}
\address{Siena College Department of Mathematics, 515 Loudon Road, Loudonville,
NY\ 12211, USA}
\email{jbannon@siena.edu}
\thanks{JB is partially supported by a Lancaster University (STEM) Fulbright Scholar Award}
\author{Amine Marrakchi}
\thanks{AM is a JSPS International Research Fellow (PE18760)}
\address{RIMS, Kyoto University, 606-8502 Japan}
\email{amine.marrakchi@math.u-psud.fr}
\author{Narutaka Ozawa}
\address{RIMS, Kyoto University, 606-8502 Japan}
\email{narutaka@kurims.kyoto-u.ac.jp}
\thanks{NO is partially supported by JSPS KAKENHI 17K05277 and 15H05739}
\keywords{von Neumann Algebras; Full factor; co-amenable inclusion; compact group; minimal action}
\subjclass[2010]{46L10, 46L36, 46L40, 46L55}

\begin{abstract}
We show that if $M$ is a full factor and $N \subset M$ is a co-amenable subfactor with expectation, then $N$ is also full. This answers a question of Popa from 1986. We also generalize a theorem of Tomatsu by showing that if $M$ is a full factor and $\sigma \colon G \curvearrowright M$ is an outer action of a compact group $G$, then $\sigma$ is automatically minimal and $M^G$ is a full factor which has w-spectral gap in $M$.
Finally, in the appendix, we give a proof of the fact that several natural notions of co-amenability for an inclusion $N\subset M$ of von Neumann algebras are equivalent, thus closing the cycle of implications given 
in Anantharaman-Delaroche's paper in 1995.
\end{abstract}

\maketitle

\section{Introduction}

An inclusion of two von Neumann algebras $N \subset M$ is called \emph{co-amenable} if the inclusion of the commutants $M' \subset N'$ admits a conditional expectation $\Phi \colon N' \rightarrow M'$. In particular, $M$ is amenable if and only if the inclusion $\C \subset M$ is co-amenable. This notion originally appeared in Definition 3.2.1 of \cite{Po86}, and was there called relative amenability. In \cite{MoPo03}, the relation between this concept and the notion of co-amenability for groups was clarified. Indeed, an inclusion of groups $H \subset G$ co-amenable if and only if there exists a $G$-invariant mean on $\ell^\infty(G/H)$, and it holds that the inclusion of group von Neumann algebras $L(H) \subset L(G)$ is co-amenable if and only if the inclusion $H \subset G$ is co-amenable. Other examples of co-amenable inclusion are given by finite index inclusions, inclusions of the form $N\subset N\rtimes G$ with $G$ amenable or inclusions of the form $M^G \subset M$ for a minimal compact group action $G \curvearrowright M$.

Following \cite{Co74}, we say that a separable factor $M$ is \emph{full} if the group of all its inner automorphisms $\Inn(M)$ is closed in $\Aut(M)$. In the type $\II_1$ case, the factor $M$ is full if and only if it does not have the \emph{property Gamma} of Murray and von Neumann \cite{MvN43}, i.e.\ if and only if every central sequence of $M$ is trivial, or equivalently, if and only if $M' \cap M^\omega = \C$ where $M^\omega$ is the ultrapower of $M$ with respect to a free ultrafilter on $\N$. Fullness is also closely related to a group-theoretic notion. Recall that a group $G$ is \emph{inner amenable} when there exists a non-trivial conjugacy invariant mean on $\ell^\infty(G)$. By \cite{Ef73}, if $G$ is non-inner amenable then $L(G)$ is a full factor. A subtle counter-example due to Vaes \cite{Va09} shows that the converse implication is however not true.

It is not difficult to see that if $H \subset G$ is a co-amenable group inclusion and if $H$ is inner amenable, then $G$ must also be inner amenable. In \cite[Problem 3.3.2]{Po86}, Popa asked the following analogous question in the context of von Neumann algebras: 

\begin{question}[\cite{{Po86}}] If $M$ is a full $\II_1$ factor and $N \subset M$ is a co-amenable subfactor, is it true that $N$ is also full? 
\end{question}

This question was partially motivated by Proposition 1.4.1 of \cite{Po83}, which affirmatively answers the above question when $M$ is the crossed product of $N$ by an action of $\Z$. A few years later (cf.\ Proposition 1.11 of \cite{PiPo86}), he and Pimsner proved that when $N\subset M$ is a finite index inclusion of $\II_1$ factors then $N$ is full if and only if $M$ is full. The case where the co-amenable inclusion $N\subset M$ is regular and irreducible was confirmed by B\'{e}dos \cite{Be90}, and independently by Bisch \cite{Bi90}. Such subfactors arise as cocycle crossed products of free cocycle actions of amenable groups on $\II_{1}$ factors (cf.\ \cite{Ch79}), and both B\'{e}dos's and Bisch's proofs rely heavily on this. We note that the specialized question for crossed products by amenable groups also follows from the recent work of the second author, which completely characterizes fullness of crossed product factors $N\rtimes G$ with $N$ a factor of arbitrary type and $G$ discrete amenable \cite{Ma18}. In that case, not only $N$ is full, but it also has \emph{spectral gap} in $N \rtimes G$.

We now state our main theorem. We assume that our inclusion $N \subset M$ is \emph{with expectation}, i.e.\ that there exists a faithful normal conditional expectation $\rE_N\colon M \rightarrow N$. This is automatic if $M$ is tracial, but it is both a necessary and a natural assumption in the non-tracial setting.

\begin{letterthm} \label{co-amenable full}
Let $M$ be a separable full factor and $N \subset M$ a co-amenable subalgebra with expectation. Then there exists a non-zero projection $p \in N' \cap M$ such that $p(N' \cap M^\omega)p= \C p$ for all $\omega \in \beta \N$.
\end{letterthm}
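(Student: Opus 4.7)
The plan is to argue by contradiction: assuming no such projection exists, I would use co-amenability to convert the failure into a failure of the fullness of $M$.

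\emph{Step 1 (Reduction to a universal witness).} Negating the conclusion, for every non-zero projection $p \in N' \cap M$ there exists an ultrafilter $\omega \in \beta\N$ and a non-scalar element of $p(N' \cap M^\omega)p$. Using a maximality argument on projections of $N' \cap M$, together with a diagonal / gluing construction across ultrafilters (for instance, enumerating a countable dense family of corners and combining their witnesses into a single sequence), I would extract one $\omega$ and one $y \in N' \cap M^\omega$ such that $p y p \notin \C p$ for every non-zero projection $p \in N' \cap M$. This $y$ serves as a ``universal'' obstruction, non-trivial in every corner.

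\emph{Step 2 (Averaging via co-amenability).} Using an equivalent characterization of co-amenability from the appendix -- either a conditional expectation $\Phi \colon \langle M, e_N \rangle \to M$ from the Jones basic construction, or an asymptotically $M$-central net of unit vectors $(\xi_i)$ in $L^2(M) \otimes_N L^2(M)$ -- I would construct from a bounded representative $(y_n)$ of $y$ a new bounded sequence $(z_n)$ in $M$ whose class $z \in M^\omega$ commutes with $M$. The design is that $\Phi$ is the identity on $M$, so the ``$M$-content'' of $y$ is preserved through the averaging; combined with the universal non-triviality from Step~1, this ensures $z$ is not a scalar.

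\emph{Step 3 (Contradiction with fullness).} Since $M$ is full, $M' \cap M^\omega = \C$, forcing $z \in \C$. This contradicts the non-triviality of $z$ from Step~2 and completes the proof.

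\emph{Main obstacle.} The heart of the argument is the averaging in Step~2: converting asymptotic $N$-centrality into asymptotic $M$-centrality while preserving non-triviality. The conditional expectation $\Phi$ from co-amenability lives on an enlargement of $M$ (namely $\langle M, e_N\rangle$ or $B(L^2(M))$), not on $M^\omega$ directly, and the transfer between these settings is delicate. One must also avoid the obvious pitfall of ``averaging away'' the original sequence: na\"ively averaging an $N$-central element against all of $M$ would collapse it to its scalar part, so the averaging must use only the $N$-equivariance of $\Phi$. The type $\III$ setting further complicates matters, requiring compatibility with the modular data and the state-dependent ultrapower.
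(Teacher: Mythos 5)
Your Step 2 is a genuine gap, and it is where the entire content of the theorem lives. You propose to take a representative $(y_n)$ of a non-scalar $y \in N' \cap M^\omega$ and ``average'' it, via the co-amenability expectation $\Phi \colon \langle M, N\rangle \to M$ (equivalently $\Phi \colon N' \to M'$), into a non-scalar $z \in M' \cap M^\omega$. No construction is actually given, and two obstructions make the plan unworkable as stated. First, $\Phi$ is in general a \emph{non-normal} conditional expectation defined on an algebra of operators on $L^2(M)$; it does not act on representing sequences and induces no map $N' \cap M^\omega \to M' \cap M^\omega$, so there is nothing to apply to $y$. Second, and more fundamentally, any averaging that upgrades asymptotic $N$-centrality to asymptotic $M$-centrality must in effect average $u y u^*$ over all of $\mathcal{U}(M)$, and that collapses $y$ to its scalar part; your remark that one should ``use only the $N$-equivariance of $\Phi$'' names the difficulty without supplying a mechanism that preserves non-triviality. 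The paper's introduction contains a closely related warning: Bisch exhibited a finite-index inclusion of hyperfinite $\mathrm{II}_1$ factors with $M' \cap N^\omega = \C$, showing that one cannot prove this theorem by naively transferring relative central sequences; the authors state that their proof is ``completely different'' for exactly this reason.

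The actual proof never manufactures an $M$-central sequence. It sets $P = (N' \cap M^\omega)' \cap M$ and averages a finite tuple of elements of $M$ over unitaries of $M$ that \emph{almost commute with} $P$, to show that $E_P$ is a pointwise weak$^*$ limit of convex combinations of inner automorphisms; this yields ${}_M L^2(\langle M, P\rangle)_M \prec {}_M L^2(M)_M$. Co-amenability gives the reverse weak containment, fullness (via $\mathcal{K}(L^2(M)) \subset C^*_{\lambda\cdot\rho}(M)$) upgrades the resulting weak equivalence to an honest containment $L^2(M) \subset L^2(\langle M, P\rangle)$, Popa's intertwining criterion then produces a corner $p$ with $Pp \subset pMp$ irreducible of finite index, and the Pimsner--Popa finite-index result gives $p(P' \cap M^\omega)p = \C p$, hence $p(N' \cap M^\omega)p = \C p$. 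There is, in addition, a separate reduction of the arbitrary-type case to the tracial one by tensoring with a free Araki--Woods factor and passing to continuous cores, which your sketch only gestures at. As written, your proposal does not contain the ideas needed to close Step 2, and Steps 1 and 3 cannot compensate for it.
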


In particular, we obtain a complete solution to Popa's question, but for factors of arbitrary type and with the additional property that $N$ has \emph{w-spectral gap} in $M$ in the irreducible case.
\begin{lettercor} \label{cor w-gap}
Let $M$ be a separable full factor and let $N \subset M$ be a co-amenable subfactor with expectation. Then $N$ is full. Moreover, if $N'\cap M=\C$, then we have $N' \cap M^\omega=\C$ for all $\omega \in \beta \N$.
\end{lettercor}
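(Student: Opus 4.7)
The plan is to deduce Corollary~\ref{cor w-gap} from Theorem~\ref{co-amenable full} by a straightforward compression argument, after observing that the ``moreover'' clause is essentially automatic. Indeed, if $N' \cap M = \C$, then the non-zero projection $p$ supplied by Theorem~\ref{co-amenable full} must equal $1$, and the conclusion $p(N' \cap M^\omega)p = \C p$ reads directly as $N' \cap M^\omega = \C$ for every $\omega \in \beta\N$. So the second assertion requires no further work.

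For the fullness of $N$, I would exploit the non-zero projection $p \in N' \cap M$ from Theorem~\ref{co-amenable full}. Because $N$ is a factor and $p$ is non-zero, the map $N \ni x \mapsto xp = pxp$ is an injective normal $*$-homomorphism of $N$ onto $Np = pNp \subset pMp$; in particular $Np$ is a factor canonically isomorphic to $N$. The inclusion $Np \subset pMp$ inherits a faithful normal conditional expectation from $\rE_N$, so (via Ando--Haagerup's theory of Ocneanu ultrapowers) it induces an inclusion $(Np)^\omega \subset (pMp)^\omega$, where the latter is naturally identified with $p M^\omega p$.

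The main step is to identify the relative commutant: $(Np)' \cap (pMp)^\omega = p(N' \cap M^\omega)p$. One inclusion is evident. For the other, any $y \in (pMp)^\omega$ satisfies $py = yp = y$, so its commutation with the compressed algebra $Np = pN$ unfolds into commutation with all of $N$ inside $M^\omega$; this puts $y$ into $p(N' \cap M^\omega)p$. Applying Theorem~\ref{co-amenable full} now gives $(Np)' \cap (pMp)^\omega = \C p$, and in particular
\[
(Np)' \cap (Np)^\omega \;\subset\; (Np)' \cap (pMp)^\omega \;=\; \C p.
\]
Hence $Np$, and therefore $N$, is a full factor.

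There is no serious obstacle past Theorem~\ref{co-amenable full}: the corollary is a formal consequence. The only care needed is in the non-tracial case, where one must invoke the (now standard) facts that $(pMp)^\omega = p M^\omega p$ for Ocneanu's ultrapower, that the inclusion $(Np)^\omega \subset (pMp)^\omega$ is well-defined when there is a faithful normal conditional expectation, and that fullness of a separable factor is correctly detected by triviality of $N' \cap N^\omega$ in this ultrapower — all of which are available from Ando--Haagerup's framework.
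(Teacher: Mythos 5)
Your argument is correct and is exactly the (unwritten) derivation the paper intends: the ``moreover'' clause is immediate since $p\in N'\cap M=\C$ forces $p=1$, and fullness of $N$ follows by compressing to $Np\subset pMp$, identifying $(Np)'\cap(pMp)^\omega$ with $p(N'\cap M^\omega)p=\C p$, and invoking the Ando--Haagerup characterization of fullness via $N'\cap N^\omega$. The only detail worth recording is that the expectation $pMp\to Np$ is obtained as $\rE_N(p)^{-1}\rE_N(\,\cdot\,)p$, which makes sense because $\rE_N(p)\in N'\cap N=\C$ is a positive scalar.
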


 We note that our proof is completely different from the case of crossed products by amenable groups. Indeed, in that specific case, Bisch proves that if $N$ is not full then $M' \cap N^\omega \neq \C$. But he subsequently (cf.\ \cite{Bi94}) exhibits a finite index inclusion $N\subset M$ of hyperfinite $\II_1$ factors such that $M' \cap N^\omega =\C$, hence showing that one cannot hope to solve the general problem by proving that $M' \cap N^\omega \neq \C$ for all co-amenable inclusions of $\II_{1}$ factors $N\subset M$ where $N$ is not full. Instead, the proof of Theorem \ref{co-amenable full} essentially encodes a reduction of the problem to the finite index case. We also note that in the case of crossed products of $\II_1$ factors by amenable groups, one can actually show that the subfactor $N \subset M$ has spectral gap thanks to \cite{Jo81} and \cite{Ma18}. In Remark \ref{rem spectral gap}, we observe that this is no longer true for arbitrary co-amenable subfactors: in general the conclusion $N' \cap M^\omega=\C$ of Corollary \ref{cor w-gap} cannot be improved to true spectral gap.

In our second main result, we give an application of Theorem \ref{co-amenable full} to actions of compact groups on full factors. Very recently,  Tomatsu proved that if $\sigma \colon G \curvearrowright M$ is a minimal action of a compact group $G$ on a full factor $M$, then $M^G$ and $M \rtimes G$ are full factors \cite[Theorem 4.8]{To18}. Recall that an action is \emph{minimal} if it is faithful and $(M^G)' \cap M=\C$. A minimal action is necessarily \emph{outer} meaning that $\sigma_g \notin \Inn(M)$ for all $g \neq 1$. But the converse is far from being true in general. Nevertheless, thanks to Theorem \ref{co-amenable full}, we can show that outerness automatically implies minimality for actions of compact groups on full factors. This is the content of our second main theorem. We also strengthen Tomatsu's result by showing that $M^G$ has w-spectral gap in $M$.

\begin{letterthm} \label{minimalGaction}
Let $\sigma \colon G \curvearrowright M$ be an outer action of a compact group $G$ on a separable factor $M$. Assume that $M$ is full. Then $\sigma$ is automatically minimal and $M^G$ and $M \rtimes G$ are full factors. Moreover, we have $(M^G)' \cap M^\omega=\C$ and $M ' \cap (M \rtimes G)^\omega = \C$ for all $\omega \in \beta \N$.
\end{letterthm}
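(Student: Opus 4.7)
The plan is to reduce Theorem \ref{minimalGaction} to Theorem \ref{co-amenable full} and Corollary \ref{cor w-gap} applied to the inclusions $M^G \subset M$ and $M \subset M \rtimes G$; the crux is to establish minimality of $\sigma$, after which the rest follows routinely. For the setup: since $G$ is compact, the Haar-averaging map $\rE_{M^G}(x)=\int_G \sigma_g(x)\,dg$ is a faithful normal conditional expectation onto $M^G$, and amenability of $G$ (via the characterizations of co-amenability in the appendix) makes both $M^G\subset M$ and $M\subset M\rtimes G$ co-amenable with expectation. Outerness of $\sigma$ ensures that $M\rtimes G$ is a factor with $M'\cap (M\rtimes G)=\C$.

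The main step is to prove minimality, i.e., $(M^G)'\cap M=\C$. I would apply Theorem \ref{co-amenable full} to the co-amenable inclusion $M^G\subset M$ in the full factor $M$, obtaining a non-zero projection $p\in (M^G)'\cap M$ with $p\bigl((M^G)'\cap M^\omega\bigr)p=\C p$ for all $\omega\in\beta\N$. The action $\sigma$ extends to $M^\omega$ preserving $(M^G)'\cap M^\omega$, and each translate $\sigma_g(p)$ inherits the same cut-down property; moreover, $\int_G\sigma_g(p)\,dg$ lies in $\bigl((M^G)'\cap M\bigr)^G=Z(M^G)$, which equals $\C$ once $M^G$ is shown to be a factor (a standard consequence of outerness for compact group actions). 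The main obstacle, and the technical heart of the argument, is to globalize the local triviality at $p$ into $(M^G)'\cap M^\omega=\C$: for any $x\in (M^G)'\cap M^\omega$ one has $\sigma_g(p)\,x\,\sigma_g(p)=\lambda_g\sigma_g(p)$ with $\lambda_g\in\C$, and the task is to show via the $G$-equivariant structure of $(M^G)'\cap M$ -- which for outer compact-group actions is controlled by the Peter-Weyl decomposition of $M$ as a bimodule over $M^G$ -- that the family $\{\lambda_g\}$ is forced to be constant, pinning down $x$ as a scalar. This yields in particular $(M^G)'\cap M=\C$, so that $\sigma$ is minimal.

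With minimality established, the remaining assertions are immediate. Corollary \ref{cor w-gap} applied to the now-irreducible co-amenable inclusion $M^G\subset M$ in the full factor $M$ yields both fullness of $M^G$ and $(M^G)'\cap M^\omega=\C$ for every $\omega\in\beta\N$. Tomatsu's theorem \cite[Theorem 4.8]{To18}, whose hypotheses are now satisfied, gives fullness of $M\rtimes G$; and Corollary \ref{cor w-gap} applied to the irreducible co-amenable inclusion $M\subset M\rtimes G$ in the full factor $M\rtimes G$ then yields $M'\cap (M\rtimes G)^\omega=\C$, completing the proof.
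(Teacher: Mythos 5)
There are two genuine gaps here, both at the heart of the argument.

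First, your setup assumes that $M^G \subset M$ is co-amenable because $G$ is amenable. This is not established anywhere, and it is exactly the point at issue: the paper only knows co-amenability of $M^G\subset M$ for \emph{minimal} compact actions (via $\langle M,M^G\rangle\cong M\rtimes G$), and minimality is what you are trying to prove. What is available for a merely outer action is co-amenability of $M\rtimes G\subset\langle M\rtimes G,M\rangle$ (Theorem \ref{group co-amenable}, which holds for every locally compact $G$). The paper exploits precisely this: it first runs the argument for the action $\sigma\otimes\rho$ of $G$ on $N=M\ovt\B(\rL^2(G))$, whose fixed-point inclusion $N^G\subset N$ \emph{is} isomorphic to $M\rtimes G\subset\langle M\rtimes G,M\rangle$ and hence co-amenable, deduces that $\sigma\otimes\rho$ (hence $\sigma$) is minimal, and only then applies Theorem \ref{co-amenable full} to $M^G\subset M$. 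Without this detour your very first application of Theorem \ref{co-amenable full} to $M^G\subset M$ is unjustified.

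Second, the step you yourself flag as ``the technical heart'' --- upgrading the local conclusion $p\bigl((M^G)'\cap M^\omega\bigr)p=\C p$ to $(M^G)'\cap M=\C$ by averaging $\sigma_g(p)$ and arguing the scalars $\lambda_g$ are constant --- is not a proof, and I do not see how to make it one as stated (for instance, the translates $\sigma_g(p)$ may be pairwise orthogonal, in which case the family $\{\lambda_g\}$ carries no usable constraint). The paper's actual mechanism is a separate dichotomy (Theorem \ref{minimal or diffuse}): for an outer compact group action, $(M^G)'\cap M$ is either trivial or diffuse. Since Theorem \ref{co-amenable full} produces a minimal projection in $(M^G)'\cap M$, the relative commutant is not diffuse, hence trivial. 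That dichotomy requires real work (Lemma \ref{compact multi-matrix}, the Peter--Weyl/contragredient argument in the factorial case, and the open-subgroup reduction), and your sketch contains no substitute for it. Two smaller points: invoking Tomatsu for fullness of $M\rtimes G$ is unnecessary (and against the spirit of the result, which reproves and strengthens Tomatsu); the paper instead deduces $M'\cap(M\rtimes G)^\omega=\C$ directly from $(M^G)'\cap M^\omega=\C$ using $\pi(M\rtimes G)^\omega\subset M^\omega\ovt\B(\rL^2(G))$ and $\pi(M^G)=M^G\otimes\C$, which also avoids a second appeal to Corollary \ref{cor w-gap}.
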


To obtain this theorem, we first prove the following dichotomy: for any outer action $\sigma \colon G \curvearrowright M$ of a compact group $G$ on an arbitrary factor $M$, either $(M^G)' \cap M =\C$ or $(M^G)' \cap  M$ is diffuse. Then we apply Theorem \ref{co-amenable full}.

\subsection*{Acknowledgments}
The first author thanks Jan Cameron for sharing his ideas on Property $\Gamma$ for an earlier approach to the problem, and is grateful for the hospitality of the Lancaster University Department of Mathematics and Statistics.  The second author is grateful to Yuki Arano for a thought-provoking discussion regarding Theorem \ref{minimal or diffuse}. We are also grateful to Adrian Ioana for providing us with the reference \cite{HK05} used in Remark \ref{rem spectral gap}.


\section{Preliminaries}

\subsection{Ultrapowers}
Given a free (i.e.\ nonprincipal) ultrafilter $\omega \in \beta \N \setminus \N$ and $\sigma$--finite von Neumann algebra $M$ one defines 
\begin{align*}
\cI_{\omega}(M)&=\{ (x_{n})_{n}\in \ell^{\infty}(\N,M) \mid \lim_{n\rightarrow \omega} x_{n}=0 \; *\text{-strongly} \}
\end{align*}
and its multiplier algebra  
\begin{align*}
\fM^{\omega}(M)&=\{(x_{n})_{n}\in \ell^{\infty}(\N,M) \mid (x_{n})_{n} \cI_{\omega}(M) + \cI_{\omega}(M)(x_{n})_{n} \subset \cI_{\omega}(M)\}.
\end{align*}
The latter is a $C^{*}$-algebra inside of which the former is a norm-closed two-sided ideal. The quotient $C^{*}$-algebra $M^{\omega}:=\fM^{\omega}(M)/\cI_{\omega}(M)$ is actually a von Neumann algebra and is called the Ocneanu ultrapower of $M$ associated to $\omega$ \cite{Oc85}. 
The von Neumann algebra $M$ naturally embeds into $M^{\omega}$ as $x\mapsto (x_{n})_{n}$ with $x_{n}=x$ for all $n\in \N$. 

The Ocneanu ultrapower generalizes the tracial ultrapower associated to a finite von Neumann algebra $M$ with trace $\tau$. Indeed, in that specific case we have $\cI_{\omega}(M)=\{(x_{n})_{n} \in \ell^{\infty}(\N,M) \mid \lim_{n\rightarrow \omega}\|x_{n}\|_{2}=0\}$ and $\fM^{\omega}(M)=\ell^{\infty}(\N,M)$, and the Ocneanu ultrapower admits the faithful normal trace $\tau_{\omega}((x_{n})_{n}+\cI_{\omega}(M))=\lim_{n\rightarrow \omega}\tau(x_{n})$.

\subsection{Standard form and basic construction}
For any von Neumann algebra $M$, we denote by $(M, \rL^2(M), J, \rL^{2}(M)^{+})$ its \emph{standard form} \cite{Ha73}. Recall that $\rL^2(M)$ is naturally endowed with the structure of a $M$-$M$-bimodule: we will simply write $x \xi y = x Jy^*J \xi$ for all $x, y \in M$ and all $\xi \in \rL^2(M)$. We will denote by $\lambda \colon M \rightarrow \B(\rL^2(M))$ and $\rho \colon M\op \rightarrow \B(\rL^2(M))$ the associated left and right regular representation of $M$. We have $\lambda(M)=\rho(M\op)'$. We will use the notation  $C_{\lambda \cdot \rho}^{*}(M)$ for the $C^*$-algebra generated by $\lambda(M)\rho(M\op)$.

If $N \subset M$ is a subalgebra, the so-called \emph{basic construction} associated to it is the von Neumann algebra $\rho(N\op)'$, which will be denoted $\langle M, N \rangle$. Since $\lambda(M) \subset \rho(N\op)'$, we will view $M$ as a subalgebra of $\langle M, N \rangle$. When $N \subset M$ admits a faithful normal conditional expectation $\rE_N \colon M \rightarrow N$, there is associated to it a canonical $N$-$N$-bimodular isometry $V \colon \rL^2(N) \rightarrow \rL^2(M)$. Its range projection $e_N=VV^* \in \B(\rL^2(M))$ is the \emph{Jones projection} of $\rE_N$. In that case, it holds that $e_Nxe_N=\rE_N(x)e_N$ for all $x \in M$ and $Me_NM$ is a dense $*$-sub\-algebra of $\langle M, N \rangle$. Moreover, we can identify $\rL^2(\langle M, N \rangle )$ with the space $\rL^{2}(\rE_N)$ associated to the completely positive map $\rE_N \colon M\rightarrow M$ via the Stinespring construction, i.e.\ as the Hilbert space separation and completion of $M\odot \rL^{2}(M)$ under the semi-inner product defined on simple tensors by $\langle x_{1}\otimes \eta_{1},x_{2}\otimes \eta_{2} \rangle_{\rE_N}:=\langle\rE_N(x_{2}^{*}x_{1})\eta_{1} ,\eta_{2}\rangle_{\rL^{2}(M)}$ with the natural bimodule action given by $a(x\otimes \eta)b:=ax\otimes\eta b$ for $x_{1},x_{2},a,b,x\in M$ and $\eta_{1},\eta_{2}\in \rL^{2}(M)$.

\subsection{Correspondences}
Let $M$ and $N$ be von Neumann algebras. The opposite von Neumann algebra $N\op=\{n\op:n\in N\}$ is identical to $N$ as a Banach space, but with product $n_{1}\op n_{2}\op=(n_{2}n_{1})\op$ and involution $(n\op)^{*}=(n^{*})\op$. An $M$-$N$-correspondence is a $*$-repre\-sen\-tation $\pi_{\cH} \colon M\odot N\op\rightarrow \mathbf{B}(\cH)$ that is normal in each variable, or equivalently a binormal $M$-$N$-bimodule structure on $\cH$.

We will say that an $M$-$N$-correspondence $\cH$ is \emph{contained} in another $\cK$, written abusively $\cH \subset \cK$, if there exists an $M$-$N$-bimodular isometry $V\colon \cH \rightarrow \cK$. We will say that $\cH$ is \emph{weakly contained} in $\cK$, written $\cH\prec\cK$, if we have $\| \pi_{\cH}(T) \| \leq \| \pi_{\cK}(T) \|$ for all $T \in M \odot N\op$. Said another way, we have $\cH \prec \cK$ if and only if the matrix coefficients of $\cH$ can be locally modeled using those of $\cK$: for all $\xi \in \cH$, finite subsets $E\subset M$, $F\subset N$ and  $\varepsilon>0$ there exist $n\in \IN$ and $\{\eta_{1},\ldots \eta_{n}\}\subset \cK$ such that
\begin{align*}
    |\langle x\xi y, \xi \rangle-\sum_{i=1}^{n}\langle x\eta_{i}y,\eta_{i} \rangle|<\varepsilon
\end{align*}
for all $x\in E$ and $y\in F$. If $\cH$ has a vector $\xi$ which is cyclic, i.e.\ such that $M \xi N$ is dense in $\cH$, then it is enough to check the above criterion for $\xi$.

Two $M$-$N$-correspondences $\cH$ and $\cK$ are weakly equivalent, written $\cH\sim\cK$, if $\cH\prec \cK$ and $\cK\prec \cH$. Equivalently, $\cH\sim\cK$ if and only if $\|\pi_{\cH}(T)\|=\|\pi_{\cK}(T)\|$ for all $T \in M \odot N\op$.


\subsection{Intertwining bimodules and finite index subfactors}
We will use the following classical criterion from Popa's intertwining theory 
to reduce our problem to the case of finite index subfactors (\cite{PiPo86}). 
We are interested in $(\rm i) \Rightarrow (\rm iv)$, for which we give 
a direct proof for the reader's convenience. 
\begin{lem}[\cite{Po01,Po03}] \label{intertwining}
Let $M$ be a $\II_1$ factor and $N \subset M$ a subalgebra. The following are equivalent:
\begin{itemize}
\item [$(\rm i)$] $\rL^2(M) \subset \rL^2(\langle M, N \rangle)$ as an $M$-$M$-bimodule.
\item [$(\rm ii)$] There exists a normal conditional expectation $\Phi \colon \langle M, N \rangle \rightarrow M$.
\item [$(\rm iii)$] $M \prec_M N$ in the sense of Popa's intertwining theory.
\item [$(\rm iv)$] There exists a non-zero projection $p \in N' \cap M$ such that $Np \subset pMp$ is an irreducible subfactor with finite index.
\end{itemize}
\end{lem}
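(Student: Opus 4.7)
\emph{Plan.} I will give a direct proof of $(\rm i) \Rightarrow (\rm iv)$; the remaining equivalences are standard consequences of Popa's intertwining theory. Let $P := \langle M, N\rangle$, equipped with the canonical semifinite trace $\Tr$ normalized by $\Tr(xe_Ny) = \tau(xy)$ for $x, y \in M$. Hypothesis $(\rm i)$ supplies an $M$-$M$-bimodular isometry $V \colon \rL^2(M) \to \rL^2(P, \Tr)$, and the vector $\xi := V(\widehat{1}_M) \in \rL^2(P)$ is then a nonzero $M$-central vector satisfying the matrix-coefficient identity $\langle x\xi, \xi\rangle = \tau(x)$ for every $x \in M$.

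The first step is to realize $\xi$ as coming from a positive operator. Writing $\xi = \widehat{T}$ for some square-integrable operator $T$ affiliated with $P$ (with $\Tr(T^*T) = 1$), the $M$-centrality $x\xi = \xi x$ forces $T$ to be affiliated with $M' \cap P$. Passing from $T$ to $|T|$ preserves the identity $\Tr(xT^*T) = \tau(x)$ for $x \in M$, so we may assume $T \geq 0$. Using the canonical identification $M' \cap P = J(N' \cap M)J$, arising from the realization of $P$ as $(JNJ)'$ on $\rL^2(M)$, we obtain a positive self-adjoint $T_0$ affiliated with $N' \cap M$ such that $T = JT_0J$.

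The second step extracts the finite-index projection. Choose a nonzero spectral projection $p := \chi_{[\alpha, \beta]}(T_0) \in N' \cap M$ with $0 < \alpha \leq \beta < \infty$; this is possible since $T_0 \neq 0$. The compression $JpJ \cdot T^2 \cdot JpJ = J(pT_0^2p)J$ is then bounded between $\alpha^2 \cdot JpJ$ and $\beta^2 \cdot JpJ$. Combined with the trace relation $\Tr(xT^2) = \tau(x)$ for $x \in M$ together with the formula $\Tr(JaJ \cdot e_N) = \tau(a)$ for $a \in M$ (obtained from $\Tr(ze_N) = \langle z\widehat{1}_M, \widehat{1}_M\rangle_{\rL^2(M)}$ applied to $z = JaJ$), a direct computation produces a Pimsner-Popa-type lower bound $\rE_N(y) \geq c \cdot y$ for all $y \in (pMp)_+$ with some $c > 0$. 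By the theorem of Pimsner-Popa, this forces $[pMp : Np] < \infty$.

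Finally, I upgrade to irreducibility. Since $N$ is a factor and $p \neq 0$, the image $Np \cong N$ is also a factor in $pMp$. Finite Jones index then makes the relative commutant $(Np)' \cap pMp$ finite-dimensional; pick a minimal projection $q \in (Np)' \cap pMp$. Using $q \leq p$ and $p \in N' \cap M$ one verifies directly that $q \in N' \cap M$, and then $Nq \subset qMq$ is an irreducible subfactor of finite index, which is $(\rm iv)$. The main obstacle in this outline is the second step: the explicit extraction of a Pimsner-Popa inequality from the trace identity requires careful manipulation of the semifinite trace $\Tr$, the Jones projection $e_N$, the conjugation $J$, and the central operator $T_0$, using the formula $\Tr(JaJ \cdot e_N) = \tau(a)$ to bridge between the semifinite trace on $P$ and the finite trace $\tau$ on $M$. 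Once finite index is secured, the upgrade to irreducibility is essentially formal.
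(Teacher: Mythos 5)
Your first step is correct and standard: from the $M$-$M$-bimodular isometry you rightly obtain a positive square-integrable operator $T=JT_0J$ with $T_0$ affiliated with $N'\cap M$ and $\Tr(xT^2)=\tau(x)$, and the spectral projection $p=\chi_{[\alpha,\beta]}(T_0)$ then gives a nonzero projection $JpJ\in M'\cap\langle M,N\rangle$ with $\Tr(JpJ)\le\alpha^{-2}<\infty$ (since $JpJ$ commutes with $JT_0^2J$, one has $\alpha^2 JpJ\le JT_0^2J$). This is essentially the classical proof that $(\rm i)$ implies $(\rm iii)$. The genuine gap is your second step, which you yourself flag as ``the main obstacle'' and never carry out --- and which does not follow as stated. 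All that the construction tells you about $p$ is that $\Tr(JpJ)<\infty$; this yields (by compressing the trace-preserving expectation of the finite algebra $JpJ\langle M,N\rangle JpJ$ onto $MJpJ$) a \emph{normal conditional expectation} $\langle pMp,Np\rangle\to pMp$, i.e.\ condition $(\rm ii)$ for the reduced inclusion, but not a Pimsner--Popa bound and not finite index. The missing point is that $p(N'\cap M)p=(Np)'\cap pMp$ could a priori be diffuse: a spectral projection of $T_0$ carries no minimality information inside $N'\cap M$, and without a minimal projection in that corner neither your asserted inequality $\rE_N(y)\ge cy$ on $(pMp)_+$ nor your third step (choosing a minimal projection $q$ in $(Np)'\cap pMp$) can get off the ground. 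This is exactly where the paper's proof does its real work: it shows that the support of $\Phi(e_N)$ has a completely atomic corner in $N'\cap M$ by splitting a hypothetical diffuse corner $q_0$ into projections $q_i$ of trace at most $\ve$, using $q_ie_N=Jq_iJ\,e_N$ to obtain $q_0e_Nq_0\le\sum_i q_iJq_iJ$ and concluding $\Phi(q_0e_Nq_0)=0$, a contradiction. Some argument of this kind is indispensable and is entirely absent from your outline.

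A secondary but real error: in the last step you write ``Since $N$ is a factor \dots'', but the lemma assumes only that $N$ is a subalgebra (and in the body of the paper it is applied to $P=(N'\cap M^\omega)'\cap M$, which need not be a factor). Factoriality of $Np$ must be a \emph{consequence} of irreducibility, i.e.\ of $p(N'\cap M)p=\C p$ for a suitably minimal $p$, not an input; likewise ``finite Jones index $\Rightarrow$ finite-dimensional relative commutant'' presupposes the subfactor setting you have not established.
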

\begin{proof}[Proof of $(\rm i) \Rightarrow (\rm iv)$]
Assume $(\rm i)$ holds. 
The normal conditional expectation $\Phi$ in $(\rm ii)$ is obtained by 
compressing elements of $\langle M, N \rangle$ to 
the $M$-$M$-sub\-module $\rL^2(M)\subset \rL^2(\langle M, N \rangle)$. 
Since $Me_NM$ is ultraweakly dense in $\langle M, N \rangle$, 
the element $a:=\Phi(e_N) \in N'\cap M$ is nonzero. 
Let $q\in N'\cap M$ denote the support projection of $a$. 
We claim that $q(N'\cap M)q$ is completely atomic. 
Suppose for the sake of a contradiction that there is 
a nonzero projection $q_0\le q$ in $N'\cap M$ such 
that $q_0(N'\cap M)q_0$ is diffuse. 
Then for any $\ve>0$, there are projections 
$q_1,\ldots,q_n$ in $N'\cap M$ such that $\tau_M(q_i)\le\ve$ and 
$\sum_i q_i=q_0$. 
Put $q_i\op:=Jq_iJ \in M'\cap\langle M,N\rangle$ and 
observe that $\Phi(q_i\op)\in M'\cap M=\IC1$ and $q_ie_N=q_i\op e_N$. 
Hence 
\[
q_0 e_N q_0 = (\sum_i q_iq_i\op) e_N (\sum_j q_jq_j\op) \le \sum_i q_iq_i\op
\]
and so 
\[
(\tau\circ\Phi)(q_0 e_N q_0 ) \le (\tau\circ\Phi)(\sum_i q_iq_i\op) 
= \sum_i\tau_M(q_i)\Phi(q_i\op)\le \ve.
\]
Since $\ve>0$ was arbitrary, one obtains 
$q_0aq_0=\Phi(q_0 e_N q_0 )=0$, a contradiction. 
Thus one can find a minimal projection $p$ in $q(N'\cap M)q$. 
Then, $Np\subset pMp$ is an irreducible subfactor and 
its basic construction is $*$-iso\-morphic to $pp\op \langle M,N\rangle pp\op$ 
via $\rL^2(pMp)\cong pp\op \rL^2(M)$, 
where $p\op:=JpJ \in M'\cap\langle M,N\rangle$. 
Since $\Phi(p\op e_N)=\Phi(p e_N)=pa\neq0$, the element 
$\Phi(p\op)\in M'\cap M=\IC1$ is nonzero and 
$\Phi(p\op)^{-1}p\op\Phi(\,\cdot\,)$ defines a normal conditional expectation 
from $pp\op \langle M,N\rangle pp\op$ onto $pp\op M pp\op\cong pMp$.
This proves that $Np\subset pMp$ has finite index.
\end{proof}

\subsection{Crossed products by locally compact groups}
Let $\sigma \colon G \curvearrowright M$ be a continuous action of a locally compact group $G$ on a von Neumann algebra $M$. Let $M \rtimes_{\sigma, \alg} G$ be the algebraic crossed product. 

Let $\alpha \colon M \rightarrow M \ovt \rL^\infty(G)=\rL^\infty(G,M)$ be the normal $*$-homo\-morphism which sends $x \in M$ to $g \mapsto \sigma_g^{-1}(x)$. Then there is a unique injective $*$-homo\-morphism 
$$\pi \colon M \rtimes_{\sigma, \alg} G \rightarrow M \ovt \B(\rL^2(G))$$
 such that $\pi|_M=\alpha$ and $\pi|_G=1 \otimes \lambda$ for all $g \in G$, where $\lambda \colon G \rightarrow \B(\rL^2(G))$ is the left regular representation of $G$.

By definition, the \emph{crossed product von Neumann algebra} $M \rtimes_\sigma G$ is the unique von Neumann algebra containing $M \rtimes_{\sigma,\alg} G$ as a dense $*$-sub\-algebra and such that $\pi$ extends to a faithful normal $*$-homo\-morphism $\pi \colon M \rtimes_\sigma G \rightarrow M \ovt \B(\rL^2(G))$.

Recall that if $M$ is a type $\III$ factor with faithful, normal semifinite weight $\omega$ and associated modular automorphism group $(\sigma_{t})_{t\in \R}$, then $M\cong (M\rtimes_{\sigma}\R)\rtimes_{\widehat{\sigma}}\R$, and $c(M):=M\rtimes_{\sigma}\R$ is a semifinite von Neumann algebra called the \textit{continuous core} of $M$.
 
Beyond this, in this paper we will only need to know the following fundamental facts:
\begin{itemize}
\item  $\pi(M \rtimes_\sigma G)$ is the fixed point subalgebra of $M \ovt \B(\rL^2(G))$ for the diagonal action $\sigma \otimes \rho \colon G \curvearrowright M \ovt \B(\rL^2(G))$ where $\rho \colon G \curvearrowright \B(\rL^2(G))$ is the action coming from the conjugation by the right regular representation of $G$.
\item There is a (surjective) $*$-iso\-morphism from the basic construction $\langle M \rtimes G, M \rangle$ to $M \ovt \B(\rL^2(G))$ which restricts to $\pi$ on $M \rtimes_\sigma G$.
\item If $G$ is compact, there exists a surjective $*$-homomorphism from $M \rtimes G$ onto the basic construction $\langle M, M^G \rangle$ which restricts to the identity on $M$ \cite[Theorem 4.2]{Pa77}.
\end{itemize}

\subsection{Co-amenable inclusions}

\begin{df}
Let $M \subset \B(H)$ be a von Neumann algebra. We say that a subalgebra $N \subset M$ is \emph{co-amenable} in $M$ if there exists a (not necessarily normal) conditional expectation $\Phi \colon N' \rightarrow M'$.
\end{df}

This property does not depend on the choice of the spatial realization $M \subset \B(H)$. Indeed, any other spatial realization can be obtained from $H$ by amplification and reduction. We then observe that  $\Phi$ extends to a conditional expectation $\Phi \ovt \id \colon N' \ovt \B(K) \rightarrow M' \ovt \B(K)$ for any Hilbert space $K$, and restricts to a conditional expectation $\Phi|_{pN'p} \colon pN'p \rightarrow pM'p$ for any projection $p \in M'$.

We see immediately from the definition that the following properties are satisfied:
\begin{itemize}
\item If $N \subset P$ and $P \subset M$ are co-amenable then $N \subset M$ is co-amenable.
\item If we have $N \subset P \subset  M$ and $N \subset M$ is co-amenable then $P \subset M$ is co-amenable (but $N \subset P$ need not be co-amenable).
\item If $p \in N$ is any projection whose central support in $N$ is $1$, then $pNp \subset pMp$ is co-amenable if and only if $N \subset M$ is co-amenable (because the map $x \mapsto px$ is an isomorphism from $N'$ to $pN'$).
\item $N \subset M$ is co-amenable if and only if there exists a conditional expectation $\Phi \colon \langle M, N \rangle \rightarrow M$. Indeed, by definition, the inclusion $M \subset \langle M, N \rangle$ is anti-isomorphic to $M' \subset N'$ in the standard representation $M \subset \B(\rL^2(M))$.
\end{itemize}

Crossed products by locally compact groups produce natural examples of co-amenable inclusions. For the proofs, we refer to Proposition 2.6 and Proposition 3.4 in \cite{AD95}. We point out that our definition of co-amenability corresponds to what is called \emph{relative injectivity} in \cite{AD95}.
\begin{thm} \label{group co-amenable}
Let $M$ be a von Neumann algebra and $\sigma \colon G \curvearrowright M$ and action of a locally compact group. Then the inclusion $M \rtimes G \subset \langle M \rtimes G, M \rangle$ is co-amenable, i.e.\ there exists a conditional expectation $\Phi \colon M \rtimes G \rightarrow M$.

If $G$ is amenable, then the inclusion $M \subset M \rtimes_\sigma G$ is also co-amenable, i.e.\ there exists a conditional expectation $\Phi \colon \langle M \rtimes G , M \rangle \rightarrow M \rtimes G$.
\end{thm}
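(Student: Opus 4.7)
The strategy is to work spatially with the identification $\langle M \rtimes G, M \rangle \cong M \ovt B(L^2(G))$ recalled in the preliminaries, under which $M \rtimes G$ coincides with the fixed-point subalgebra of the diagonal action $\sigma \otimes \rho$. Using the equivalence given in the preliminaries — co-amenability of $N \subset P$ is the same as the existence of a conditional expectation $\langle P, N \rangle \to P$ — both assertions reduce to constructing concrete conditional expectations inside this spatial model.

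For the first assertion (arbitrary locally compact $G$), the task becomes producing a conditional expectation $\Phi \colon M \ovt B(L^2(G)) \to M$ whose restriction to $\pi(M \rtimes G)$ descends to a conditional expectation with image $M$. I would take a pure, non-normal state $\phi$ on $B(L^2(G))$ obtained by Hahn-Banach extension of evaluation at the identity on the subalgebra $L^\infty(G) \subset B(L^2(G))$, arranged so that $\phi(\lambda_g) = 0$ for $g \neq e$; the slice map $\mathrm{id} \otimes \phi$ then yields the desired $\Phi$. The crucial feature is that only a state, not an invariant mean, is needed, so the construction is available for any locally compact $G$.

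For the second assertion (amenable $G$), I would average rather than slice. Fix a left-invariant mean $m$ on $L^\infty(G)$ and build an averaging conditional expectation $E_m \colon B(L^2(G)) \to \rho(G)'$ by integrating the conjugation action of $\rho$ against $m$. The composition of $\mathrm{id} \otimes E_m$ with a natural twist from the $\sigma$-part of the diagonal action then lands inside the fixed-point algebra $M \rtimes G$, producing a conditional expectation $\langle M \rtimes G, M \rangle \to M \rtimes G$, which by the equivalence in the preliminaries is exactly the co-amenability of $M \subset M \rtimes G$.

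The main obstacle is the first assertion: one must verify that an extending state $\phi$ can be chosen so that it simultaneously extends point-evaluation at $e$ on $L^\infty(G)$ and vanishes on $\lambda_g$ for all $g \neq e$. This is where the inner, unitary-implemented nature of the $\rho$-action on $B(L^2(G))$ replaces the amenability hypothesis used in the second part. Making the slicing interact properly with both tensor factors — so that the restriction to $\pi(M \rtimes G)$ is a genuine conditional expectation and not merely a completely positive map — is the delicate step, and is essentially the content of Proposition~2.6 in \cite{AD95}.
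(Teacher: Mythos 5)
The paper does not prove this theorem; it refers to Propositions 2.6 and 3.4 of \cite{AD95}, so there is no in-text argument to compare yours against. Your outline does hit the two standard constructions --- a Dirac-type slice for the first claim and an invariant-mean average for the second --- but both halves need repair as written. For the first claim, ``evaluation at the identity'' is not a well-defined state on $\rL^\infty(G)$ when $G$ is non-discrete (its elements are classes modulo null sets). Take instead a weak$^*$ cluster point $\phi$ of the vector states of $|U|^{-1/2}1_U$ as the neighbourhood $U$ shrinks to $e$; such a $\phi$ restricts to evaluation at $e$ on bounded \emph{continuous} functions, and since $g \mapsto \omega(\sigma_g^{-1}(x))$ is continuous for $\omega \in M_*$, the slice $\id \otimes \phi$ sends $\alpha(x)$ to $x$. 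That is all that is required: $\id\otimes\phi$ restricted to $\pi(M \rtimes G)$ is then a unital positive projection onto $\alpha(M) \cong M$, hence a conditional expectation by Tomiyama's theorem (equivalently, $\alpha(M)$ lies in the multiplicative domain). The condition $\phi(\lambda_g)=0$ for $g \neq e$, which you single out as the crucial and delicate point, is irrelevant: a conditional expectation onto $M$ may take arbitrary values on the $u_g$. Likewise, amenability is avoided not because ``the $\rho$-action is inner'' but simply because a point evaluation, unlike an invariant mean, requires no invariance.

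For the second claim, $\id \otimes E_m$ followed by ``a natural twist'' does not work if read literally: the range of $\id \otimes E_m$ is $M \ovt \rho(G)' = M \ovt L(G)$, which does not contain $\pi(M\rtimes G)$ since $\alpha(M)$ sits inside $M \ovt \rL^\infty(G)$ and $\rL^\infty(G) \cap L(G) = \C$; no post-composition can repair a map that already fails to fix $\alpha(M)$. What you must average is the diagonal action itself: for $T \in M \ovt \B(\rL^2(G))$ and an invariant mean $m$, define $\Phi(T)$ by pairing each $\omega$ in the predual with the mean of $g \mapsto \langle (\sigma_g \otimes \Ad \rho_g)(T), \omega\rangle$. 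Invariance of $m$ places $\Phi(T)$ in the fixed-point algebra of $\sigma\otimes\Ad\rho$, which is $\pi(M\rtimes G)$, and $\Phi$ is the identity there, so it is the desired conditional expectation $\langle M\rtimes G, M\rangle \to M\rtimes G$. With these two corrections your sketch becomes the standard proof.
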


Finally, we need the following bimodule characterization of co-amenability. The proof is given in Proposition 2.5 and Proposition 3.6 of \cite{AD95} in the semifinite case and this is enough for our purpose. In the appendix, we include a proof for arbitrary von Neumann algebras.
\begin{thm}
Let $N \subset M$ be an inclusion of von Neumann algebras. Then $N \subset M$ is co-amenable if and only if $\rL^2(M) \prec {_M} \rL^2(\langle M, N \rangle )_M$.
\end{thm}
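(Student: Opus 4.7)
The plan is to prove the two directions by moving between weak containment of $M$-$M$-correspondences, ucp $M$-bimodular maps on the bimodule $C^*$-algebras, and conditional expectations onto $M$.

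For the direction ``weak containment $\Rightarrow$ co-amenable'', the hypothesis $\rL^2(M) \prec \rL^2(\langle M, N\rangle)$ says that the $*$-representation of $M \odot M\op$ on $\rL^2(M)$ factors in $C^*$-norm through the one on $\rL^2(\langle M, N\rangle)$. Arveson's extension theorem then produces a ucp map $\tilde\psi \colon \B(\rL^2(\langle M, N\rangle)) \to \B(\rL^2(M))$ whose restriction to $C_{\lambda \cdot \rho}^*(M)$ identifies the two realizations. The copies of $M$ and $M\op$ (acting via $\lambda$ and $\rho$) lie in the multiplicative domain of $\tilde\psi$, so $\tilde\psi$ is $M$-$M$-bimodular. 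Restricting $\tilde\psi$ to $\langle M, N\rangle \subset \B(\rL^2(\langle M, N\rangle))$ gives a map whose image commutes with $\rho(M\op)$ on $\rL^2(M)$, since the right $M$-action commutes with $\langle M, N\rangle$ on $\rL^2(\langle M, N\rangle)$; hence the image lies in $\rho(M\op)' = M$. Because this restriction is the identity on $M$, it is the sought conditional expectation $\Phi \colon \langle M, N\rangle \to M$.

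For the converse direction, given a conditional expectation $\Phi \colon \langle M, N\rangle \to M$, I compose with $M \hookrightarrow \B(\rL^2(M))$ and apply Stinespring dilation to obtain a Hilbert space $K$, a $*$-representation $\pi \colon \langle M, N\rangle \to \B(K)$, and an isometry $V \colon \rL^2(M) \to K$ with $V^*\pi(\cdot)V = \Phi(\cdot)$. The multiplicative-domain equality in Cauchy--Schwarz, applied to $\Phi|_M = \id_M$, yields $\pi(m) V = V m$ for $m \in M$. Using the $M$-bimodularity of $\Phi$, I define a commuting right $M$-action $r \colon M\op \to \B(K)$ by $r(m)(\pi(z) V h) := \pi(z) V(hm)$; one checks this is well defined because $\Phi(x)$ commutes with right multiplication on $\rL^2(M)$. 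This makes $K$ into an $M$-$M$-bimodule in which $V$ is a bimodular isometric embedding of $\rL^2(M)$. Since $K$ is generated by a $*$-representation of $\langle M, N\rangle$, one concludes $K \prec \rL^2(\langle M, N\rangle)$ as $M$-$M$-correspondences, and hence $\rL^2(M) \prec \rL^2(\langle M, N\rangle)$.

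The main obstacle is the possible non-normality of $\Phi$, which forces the Stinespring representation $\pi$ to be non-normal in general: for $K$ to qualify as an $M$-$M$-correspondence in the sense fixed in the preliminaries, both $\pi|_M$ and $r$ must be normal in each variable. In the semifinite setting treated in \cite{AD95} this is handled directly, but in the general case I would take a normal/singular decomposition of $\pi$ and restrict to the normal part, or equivalently approximate the states $\varphi \circ \Phi$ on $\langle M, N\rangle$ by normal states, for $\varphi$ ranging over normal states on $M$. After passing to the normal part, $\pi$ embeds as a direct summand of an amplification of the standard representation of $\langle M, N\rangle$, producing the desired weak containment of $M$-$M$-correspondences.
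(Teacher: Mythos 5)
Your first direction (weak containment implies the existence of a conditional expectation $\Phi\colon \langle M,N\rangle\to M$, via Arveson extension and the multiplicative domain) is correct; it is essentially the argument of Proposition 2.5 in \cite{AD95}, which is also what the paper invokes for that implication. The converse is where the real content lies, and the two devices you propose there do not close the gap. First, the normality problem is not a technicality that can be repaired at the end by ``passing to the normal part.'' When $\Phi$ is not normal, the state $\vp\circ\Phi$ on $\langle M,N\rangle$ has, in general, a nonzero singular component, so the central projection $z$ of $\pi(\langle M,N\rangle)''$ supporting the normal part of $\pi$ does not fix $V\xi_\vp$; compressing to $zK$ destroys the identity $\langle \pi_K(T)V\xi_\vp,V\xi_\vp\rangle=\langle\pi_M(T)\xi_\vp,\xi_\vp\rangle$ for $T\in M\odot M\op$, and you only retain the (uncontrolled) normal component of these coefficients. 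Approximating $\vp\circ\Phi$ by normal states runs into the same problem: to build an $M$-$M$-bimodule you must approximate the joint functional on $\langle M,N\rangle\odot M\op$, which is essentially the statement being proved. Second, and independently, the final inference ``$K$ is generated by a ($*$-, or even normal) representation of $\langle M,N\rangle$, hence $K\prec\rL^2(\langle M,N\rangle)$ as $M$-$M$-correspondences'' is invalid. Left-normality only realizes $K$ inside an amplification of $\rL^2(\langle M,N\rangle)$ as a \emph{left} $\langle M,N\rangle$-module; your right action $r$ is just some normal representation of $M\op$ landing in $\pi(\langle M,N\rangle)'$, with no reason to be carried to the standard right $M$-action under that identification, so the joint representation of $M\odot M\op$ on $K$ is not dominated by the one on $\rL^2(\langle M,N\rangle)$. (If the identification did intertwine the right actions, you would obtain genuine containment of $\rL^2(M)$ in a multiple of $\rL^2(\langle M,N\rangle)$ as $M$-$M$-bimodules -- a far stronger conclusion than co-amenability, which already fails for $\C\subset M$ with $M$ diffuse and amenable.)

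The paper's proof (Corollary A.2 of the appendix, applied to $M\subset\langle M,N\rangle$) avoids constructing a bimodule from $\Phi$ precisely because of these obstructions. The key input is the Pisier--Haagerup interpolation theorem (Theorem A.1): the norm $\|\sum_i x_i\otimes\overline{x_i}\|_{\pi_M}$ is a complex interpolation norm built from $\|\sum_i x_ix_i^*\|$ and $\|\sum_i x_i^*x_i\|$, and is therefore contractive under \emph{arbitrary}, possibly non-normal, ucp maps. Applied to $\Phi$, this yields the equality of the norms $\|\cdot\|_{\pi_M}$ and $\|\cdot\|_{\pi_{\langle M,N\rangle}}$ on the cone of elements $\sum_i x_i\otimes\overline{x_i}$ with $x_i\in M$; the upgrade from this cone to the full weak containment $\rL^2(M)\prec{}_M\rL^2(\langle M,N\rangle)_M$ is then achieved by a Hahn--Banach argument combined with the maximality property of Woronowicz's selfpolar forms. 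Some substitute for this machinery is needed; there is no soft normalization of the Stinespring dilation that does the job.
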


\subsection{Full Factors}
A factor $M$ is \emph{full} if every central net in $M$ is trivial, i.e.\ if for every bounded net $(x_{i})_{i}$ in $M$ such that $\|\varphi(\cdot x_{i})-\varphi(x_{i}\cdot)\|\rightarrow 0$ for every $\varphi\in M_{*}$, there exists a bounded net $(z_{i})_{i}$ in $\mathbb{C}$ such that $x_{i}-z_{i}1\rightarrow 0$ in the strong operator topology. If $M$ is a $\II_{1}$-factor, then by Corollary 3.8 of \cite{Co74} this is equivalent to the negation of the Murray-von Neumann property $\Gamma$. By \cite[Theorem A]{Ma18b}, a factor $M$ is full if and only if $C^{*}_{\lambda \cdot \rho}(M)$ contains the algebra $\cK(\rL^{2}(M))$ of compact operators. If $M$ has separable predual, $M$ is full if and only if the group of inner automorphisms $\Inn(M)$ is closed in $\Aut(M)$ \cite{Co74}.

\section{Proof of the main theorem}
Our proof of Theorem \ref{co-amenable full} is based on a reduction to the following theorem of Pimsner and Popa which solves the finite index case. The second part of this statement follows implicitly from the proof of \cite[Proposition 1.11]{PiPo86}.
\begin{thm}[{\cite[Proposition 1.11]{PiPo86}}] \label{full finite index}
Let $M$ be a $\II_{1}$ factor and $N \subset M$ an irreducible subfactor with finite index. Then $M$ is full if and only if $N$ is full and in that case, we have $N' \cap M^\omega=\C$ for any ultrafilter $\omega \in \beta \N \setminus \N$.
\end{thm}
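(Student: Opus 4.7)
The finite-index irreducible inclusion $N\subset M$ provides a Pimsner--Popa basis $\{m_1,\dots,m_d\}\subset M$ together with the trace-preserving conditional expectation $\rE_N\colon M\to N$ satisfying $x=\sum_j m_j\rE_N(m_j^*x)$ for $x\in M$, and the Pimsner--Popa inequality $\rE_N(x^*x)\ge [M:N]^{-1}\,x^*x$. Since $\rE_N$ is trace-preserving it extends to $\rE_N^\omega\colon M^\omega\to N^\omega$ and the expansion and inequality persist at the ultrapower level. I would also use the Jones basic construction $M_1:=\langle M,N\rangle$, which by the equality of relative commutants in the Jones tower is itself a $\II_1$ factor with $M\subset M_1$ again irreducible of finite index.

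\textbf{Direction $N$ full $\Rightarrow N'\cap M^\omega=\C$.} Given $x\in N'\cap M^\omega$, expand $x=\sum_j m_j a_j$ with $a_j:=\rE_N^\omega(m_j^*x)\in N^\omega$. Commuting $x$ with $y\in N$, then applying $\rE_N^\omega(m_i^*\,\cdot\,)$ to the identity $yx-xy=0$ and exploiting the Pimsner--Popa orthogonality relations between the $m_j$, one extracts $[y,a_j]=0$ for all $j$ and all $y\in N$. Hence each $a_j\in N'\cap N^\omega=\C$ by fullness of $N$, so $x\in\mathrm{span}\{m_1,\dots,m_d\}\subset M$, and irreducibility forces $x\in N'\cap M=\C$. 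Since $M'\cap M^\omega\subset N'\cap M^\omega$, the factor $M$ is also full.

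\textbf{Direction $M$ full $\Rightarrow N$ full.} Suppose toward a contradiction that $N$ is not full, so there exists a non-scalar $y\in N'\cap N^\omega$, which we may take with $\tau(y)=0$ and $\|y\|\le 1$. The goal is to transport $y$ to a non-scalar element of $M'\cap M^\omega$. I would form the basis-averaged element
\[
\tilde y:=\sum_{j=1}^d m_j\,y\,m_j^*\in M^\omega
\]
(or a normalized variant). A direct computation, expanding any $x\in M$ in the Pimsner--Popa basis and using that $y$ commutes with every coefficient $\rE_N^\omega(m_k^*x\,m_j)\in N^\omega$, shows that $[x,\tilde y]=0$ for all $x\in M$; irreducibility $N'\cap M=\C$ is what guarantees the cross-terms collapse. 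Non-triviality of $\tilde y$ follows from the Pimsner--Popa inequality, which prevents $\|y\|_2$ from being absorbed by $\rE_N^\omega$ and gives a lower bound $\|\tilde y-\tau(\tilde y)\|_2\ge c(d)\|y\|_2>0$. This contradicts fullness of $M$.

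\textbf{Main obstacle.} The first direction is largely bookkeeping on the Pimsner--Popa basis. The technical heart of the argument is the averaging construction in the second direction: one must check simultaneously that $\tilde y$ is genuinely $M^\omega$-central and genuinely non-scalar, both of which depend on the subtle interplay between the Pimsner--Popa basis, the irreducibility hypothesis $N'\cap M=\C$, and the Pimsner--Popa index inequality. An alternative route -- applying Step 1 to the irreducible finite-index inclusion $M\subset M_1$ to deduce $M_1$ full, then pulling fullness back down to $N$ via a corner of $M_1$ on which $N$ appears as an irreducible finite-index subfactor -- reduces this direction to the first one, but shifts the obstacle to identifying the appropriate corner.
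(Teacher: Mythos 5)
First, a point of comparison: the paper does not prove this statement at all --- it is imported verbatim from \cite[Proposition 1.11]{PiPo86}, with the remark that the assertion $N'\cap M^\omega=\C$ is implicit in Pimsner--Popa's proof. So you are supplying a proof where the paper supplies a citation. Your central object, the averaging map $y\mapsto\tilde y=\sum_j m_jym_j^*$, is indeed the right tool, and your claim that $\tilde y\in M'\cap M^\omega$ for $y\in N'\cap M^\omega$ is correct (writing $am_j=\sum_k m_k\rE_N(m_k^*am_j)$ and $m_k^*a=\sum_j\rE_N(m_k^*am_j)m_j^*$ and using that $y$ commutes with the coefficients). However, both quantitative claims on which your two directions rest are false as stated.

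In the direction ``$N$ full $\Rightarrow N'\cap M^\omega=\C$'', applying $\rE_{N^\omega}(m_i^*\,\cdot\,)$ to $yx=xy$ does \emph{not} yield $[y,a_j]=0$: the orthogonality relations give $a_iy=\sum_j\rE_N(m_i^*ym_j)\,a_j$, i.e.\ the coefficients are mixed by the (non-diagonal) matrix of left multiplication by $y$ in the basis $\{m_j\}$, so you cannot conclude $a_j\in N'\cap N^\omega$. In the direction ``$M$ full $\Rightarrow N$ full'', the lower bound $\|\tilde y-\tau(\tilde y)\|_2\ge c(d)\|y\|_2$ fails: already for $[M:N]=2$, $M=N+Nu$, one has $\tilde y=y+uyu^*$, which annihilates any $y\in N'\cap N^\omega$ with $uyu^*=-y$; the Pimsner--Popa inequality only makes $y\mapsto\tilde y$ faithful on \emph{positive} elements, not bounded below on trace-zero self-adjoint ones. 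The standard repair is different in nature: arrange $m_1=1$ in the quasi-basis and note that $\Psi=[M:N]^{-1}\sum_j m_j(\,\cdot\,)m_j^*$ restricts to the identity on $M'\cap M^\omega$ (since $\sum_jm_jm_j^*=[M:N]$), hence is a conditional expectation of $N'\cap M^\omega$ onto $M'\cap M^\omega$ satisfying $\Psi(x)\ge[M:N]^{-1}x$ for $x\ge0$. When $M'\cap M^\omega=\C$ this bounds the number of pairwise orthogonal projections in $N'\cap M^\omega$ by $[M:N]$, so $N'\cap M^\omega$ is finite dimensional; then $N'\cap N^\omega=\C$ follows from the trivial-or-diffuse dichotomy for central sequence algebras, and upgrading ``finite dimensional'' to ``$N'\cap M^\omega=\C$'' still requires a further (diagonal/reindexing) argument. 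These are precisely the missing ideas; your ``main obstacle'' paragraph correctly locates the difficulty but does not resolve it.
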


In order to reduce the problem to the finite index case, we will need few lemmas. 
The first is the following bimodule interpretation of fullness.

\begin{prop} \label{full bimodule}
Let $M$ be a full factor. Then every $M$-$M$-bimodule that is weakly equivalent to $\rL^2(M)$ must contain $\rL^2(M)$.
\end{prop}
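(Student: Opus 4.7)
The plan is to exploit Marrakchi's characterization of fullness recalled at the end of the preliminaries, namely that $M$ is full if and only if the $C^*$-algebra $A := C^{*}_{\lambda\cdot\rho}(M) \subset \B(\rL^2(M))$ contains $\cK(\rL^2(M))$. Given any $M$-$M$-correspondence $\cH$ with $\cH \sim \rL^2(M)$, the associated representations $\pi_\cH,\pi_{\rL^2(M)} \colon M \odot M\op \to \B(\cdot)$ have identical operator norms by definition of weak equivalence, and the representation on $\rL^2(M)$ has image exactly $A$. Therefore $\pi_\cH$ factors through a faithful $*$-representation $\tilde\pi_\cH \colon A \to \B(\cH)$.

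Restricting to the ideal $\cK := \cK(\rL^2(M)) \subset A$, simplicity of $\cK$ together with faithfulness of $\tilde\pi_\cH$ forces $\tilde\pi_\cH|_\cK$ to be non-zero and hence faithful. By the standard structure theorem for representations of the compact operators, the non-degenerate subspace $\cH_1 := [\tilde\pi_\cH(\cK)\cH]$ is non-zero and admits an identification $\cH_1 \cong \rL^2(M) \otimes K$ for some non-zero Hilbert space $K$, under which $\tilde\pi_\cH(T) = T \otimes 1$ for every $T \in \cK$.

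The key step is then to upgrade this identification from $\cK$-modules to $A$-modules. For any $a \in A$ and $T \in \cK$, the product $aT$ lies in $\cK$ (since $\cK$ is an ideal of $A$), so $\tilde\pi_\cH(aT) = aT \otimes 1$; comparing with $\tilde\pi_\cH(a)\,\tilde\pi_\cH(T) = \tilde\pi_\cH(a)(T \otimes 1)$ and letting $T$ range over rank-one operators, one reads off that $\tilde\pi_\cH(a)|_{\cH_1} = a \otimes 1$. In particular, the identification is compatible with the actions of $\lambda(M)$ and $\rho(M\op)$, which both live inside $A$. Picking any unit vector $k \in K$, the isometry $V_k \colon \rL^2(M) \to \cH$ given by $\xi \mapsto \xi \otimes k$ is then automatically $M$-$M$-bimodular, which is exactly the content of $\rL^2(M) \subset \cH$.

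I expect the only real subtlety to be the passage from $\cK$-modules to $A$-modules in the third paragraph; once the rank-one calculation is spelled out, the rest is essentially packaging. Everything else is a direct translation of weak equivalence into a statement about the $C^*$-norm on $M \odot M\op$ and an application of the hypothesis on $M$.
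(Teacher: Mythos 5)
Your argument is correct and follows essentially the same route as the paper: both proofs rest on the fact that weak equivalence lets $\pi_{\cH}$ extend isometrically to $C^*_{\lambda\cdot\rho}(M)$, and that fullness (via \cite[Theorem A]{Ma18b} together with \cite{Co75b}) puts $\cK(\rL^2(M))$ inside that $C^*$-algebra. The only difference is cosmetic: where you invoke the full structure theorem for representations of the compacts and then upgrade from $\cK$-modules to $A$-modules, the paper simply compresses by the image of a rank-one projection $p=\ip{\,\cdot\,,\xi}\xi$ for a cyclic vector $\xi$ and reads off the matrix coefficients from a unit vector in the range of $\pi(p)$.
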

\begin{proof}
Suppose that $M$ is full. Let $\cH$ be an $M$-$M$-bimodule with its binormal representation $\pi \colon M \odot M\op \rightarrow \B(\cH)$. Suppose that $\cH$ is weakly equivalent to $\rL^2(M)$. This means that $\pi$ extends to an isometric $*$-repre\-sen\-tation $\pi \colon C^*_{\lambda \cdot \rho}(M) \rightarrow \B(\cH)$. Take a unit vector $\xi \in \rL^2(M)$. In order to show that $\rL^2(M) \subset \cH$, it is enough to find a unit vector $\eta \in \cH$ such that $\langle a\xi b,\xi \rangle=\langle a \eta b ,\eta \rangle$ for all $a,b \in M$. Let $p$ be the projection on the one-dimensional space spanned by $\xi$. By \cite[Theorem 2.1]{Co75b} and \cite[Theorem A]{Ma18b}, $C^*_{\lambda \cdot \rho}(M)$ contains all the compact operators of $\rL^2(M)$ and in particular, it contains $p$. Since $\pi$ is isometric, $\pi(p)$ is a non-zero projection in $\B(\cH)$ and any unit vector $\eta$ in the range of $\pi(p)$ satisfies the desired equality.
\end{proof}
\begin{rem}
The converse of Proposition \ref{full bimodule} is also true when $M$ has separable predual. Indeed, in that case, if $M$ is not full, then there exists $\theta \in \overline{\Inn}(M) \setminus \Inn(M)$. Then the correspondence $\rL^2(\theta)$ is weakly equivalent to $\rL^2(M)$ but does not contain it.
\end{rem}

In the proof of our main theorem, the following key lemma will be combined with Proposition \ref{full bimodule} in order to obtain the condition in Lemma \ref{intertwining}.$(\rm i)$.

\begin{lem} \label{expectation weakly inner}
Let $(M,\tau)$ be a tracial von Neumann algebra. Suppose that $P \subset M$ is a subalgebra which satisfies $P=(P' \cap M^\omega)' \cap M$ for some ultrafilter $\omega \in \beta \N \setminus \N$. Then $_M\rL^2(\langle M,P\rangle )_M \prec {_M}\rL^2(M)_M$.
\end{lem}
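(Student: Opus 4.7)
The plan is to translate the weak containment into an averaging statement for the conditional expectation $E_P$, and then to establish the averaging via Dixmier-type arguments in the ultrapower. I would first unpack the cyclic-vector description: in the Stinespring picture $\rL^2(\langle M, P\rangle) = \rL^2(E_P)$, the vector $\xi_0 = 1 \otimes \hat 1$ is cyclic for the $M$-$M$-bimodule with matrix coefficient $\langle a \xi_0 b, \xi_0 \rangle = \tau(a E_P(b))$, while candidate vectors $\eta_i = \sqrt{\lambda_i}\,\hat u_i \in \rL^2(M)$ with $u_i \in \mathcal U(M)$, $\lambda_i \ge 0$, $\sum_i \lambda_i = 1$, produce matrix coefficients summing to $\sum_i \lambda_i \tau(u_i^* a u_i b) = \tau\bigl(a \sum_i \lambda_i u_i b u_i^*\bigr)$. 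By the cyclic-vector characterization of bimodule weak containment together with Cauchy--Schwarz, the statement ${}_M \rL^2(\langle M, P\rangle)_M \prec {}_M \rL^2(M)_M$ is therefore equivalent to: for each finite $F \subset M$ and each $\varepsilon > 0$, there exist $\lambda_1,\ldots,\lambda_n \ge 0$ with $\sum_i \lambda_i = 1$ and unitaries $u_1,\ldots,u_n \in \mathcal U(M)$ such that $\|\sum_i \lambda_i u_i b u_i^* - E_P(b)\|_2 < \varepsilon$ for every $b \in F$.

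Next, I would work inside the ultrapower. Set $Q := P' \cap M^\omega$ and $\widetilde P := Q' \cap M^\omega$, so the hypothesis reads $\widetilde P \cap M = P$. For $b \in M$, consider
\[
K := \overline{\operatorname{conv}}^{\|\cdot\|_{2,\omega}} \{u b u^* : u \in \mathcal U(Q)\} \subset \rL^2(M^\omega).
\]
Every $u \in \mathcal U(Q)$ commutes with $P$ and hence with $E_P(b) \in P$, so the natural extension $\overline E_P : M^\omega \to P$ of $E_P$ satisfies $\overline E_P(u b u^*) = E_P(b)$ (via $\tau_\omega(p\, u b u^*) = \tau_\omega(u^* p u\, b) = \tau(pb)$ for $p \in P$), and by $L^2$-continuity $\overline E_P(c) = E_P(b)$ for every $c \in K$. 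Moreover, the identity $u^* y^* u = y^*$ (valid for $u \in \mathcal U(Q)$ and $y \in \widetilde P = Q'$) gives $\tau_\omega((ubu^* - b)y^*) = 0$, so $u b u^* - b \perp \rL^2(\widetilde P)$ for all $u \in \mathcal U(Q)$; combined with the $\mathcal U(Q)$-invariance of $K$, one deduces that the unique $L^2$-minimum element of $K$ is $c_0 = E_{\widetilde P}^{M^\omega}(b)$.

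The critical step --- and the one I expect to be the main obstacle --- is identifying $c_0$ with $E_P(b)$, thereby placing $E_P(b) \in K$. Since $P \subset \widetilde P$ already, it suffices to show $c_0 \in \rL^2(M)$: for then the hypothesis yields $c_0 \in \widetilde P \cap M = P$, and the orthogonality $c_0 - b \perp \rL^2(\widetilde P) \supset \rL^2(P)$ forces $c_0 = E_P(b)$. This reduces the lemma to the commuting-square statement $E_M^{M^\omega}(\widetilde P) \subset P$. The delicate point is that $M$-bimodularity of $E_M$ only yields $E_M(\widetilde P) \subset (P' \cap M)' \cap M$, which may be strictly larger than $P$; one must use the full strength of the ultrapower form of the hypothesis to upgrade ``commutes with $P' \cap M$'' to ``commutes with all of $P' \cap M^\omega$'' for the elements $E_M(y)$, $y \in \widetilde P$. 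Once this step is in hand, selecting convex weights $\lambda_i$ and $u_i \in \mathcal U(Q)$ with $\|\sum_i \lambda_i u_i b u_i^* - E_P(b)\|_{2,\omega} < \varepsilon$, lifting each $u_i = (u_i^{(n)})^\omega$ to unitaries in $\mathcal U(M)$, and evaluating at an index $n$ in an $\omega$-large set produces the required convex combinations in $M$; a standard diagonal argument handles finite $F$ simultaneously, completing the reduction from the first paragraph.
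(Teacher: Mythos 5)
Your reduction in the first paragraph is correct and is exactly the target the paper aims at: it suffices to place $\rE_P(b)$ in the $\|\cdot\|_2$-closed convex hull of $\{ubu^*: u\in\mathcal U(M)\}$, jointly for a finite tuple of $b$'s. The problem is that the rest of the argument does not reach this target. You average over $\mathcal U(Q)$ with $Q=P'\cap M^\omega$, correctly identify the minimal-norm element of $K$ as $c_0=E_{\widetilde P}(b)$ with $\widetilde P=Q'\cap M^\omega$, and then reduce everything to the claim $c_0\in M$, equivalently $E_M(\widetilde P)\subset P$. But this claim is exactly where the content of the lemma lies, and you do not prove it: you only note that bimodularity gives $E_M(\widetilde P)\subset(P'\cap M)'\cap M$ and that one must ``upgrade'' this to commutation with all of $P'\cap M^\omega$, and then write ``once this step is in hand.'' There is no evident upgrade: the hypothesis $P=\widetilde P\cap M$ controls which elements of $M$ lie in $\widetilde P$, not where $E_M$ sends $\widetilde P$; and for $y=(y_n)\in\widetilde P$ and $v=(v_n)\in\mathcal U(Q)$, the relation $\lim_{n\to\omega}\|[y_n,v_n]\|_2=0$ gives no control on $\|[E_M(y),v_n]\|_2$, because $E_M(y)$ is only a weak$^*$ limit of the $y_n$. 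So the proof as written has a genuine gap at its central step.

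The paper's argument is arranged precisely to avoid this descent problem. Instead of exact commutants in the ultrapower, it averages over unitaries of $M$ itself that approximately commute with $P$: for finite $F\subset P$ and $\varepsilon>0$ it forms $U_{F,\varepsilon}=\{u\in\mathcal U(M)\mid \sum_{a\in F}\|ua-au\|_2<\varepsilon\}$ and the weak$^*$-compact convex sets $C_{F,\varepsilon}=\conv\{u\underline{x}u^*: u\in U_{F,\varepsilon}\}$, whose intersection $C$ is contained in $M^{\oplus n}$ from the outset. If $\underline{y}$ is the unique $\|\cdot\|_2$-minimal element of $C$, then for $u\in\mathcal U(P'\cap M^\omega)$ one checks $\rE(u\underline{y}u^*)\in C$ (lift $u$ to a sequence of unitaries of $M$ lying in $U_{F,\varepsilon}$ for $\omega$-many indices), and minimality together with $\|\rE(u\underline{y}u^*)\|_2\le\|u\underline{y}u^*\|_2=\|\underline{y}\|_2$ forces $u\underline{y}u^*=\rE(u\underline{y}u^*)=\underline{y}$; hence $\underline{y}\in(P'\cap M^\omega)'\cap M^{\oplus n}=P^{\oplus n}$, and one identifies $\underline{y}=(\rE_P(x_i))_i$. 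The hypothesis is thus applied to an element already known to lie in $M$. To salvage your version you would have to replace $K$ by something like $\bigcap_{F,\varepsilon}C_{F,\varepsilon}$; the claim $E_M(\widetilde P)\subset P$ itself I do not see how to prove, and I would not expect it to follow from the hypothesis alone.
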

\begin{proof}
Let $\{x_1, \dots, x_n \}$ be a finite subset of $M$. We will use the notations $\underline{M}=M^{\oplus n}$, $\underline{x}=(x_1,\dots,x_n) \in \underline{M}$, $\underline{M}^\omega=(M^\omega)^{\oplus n}=(M^{\oplus n})^\omega$. For every finite set $F \subset P$ and every $\varepsilon > 0$, we define $$U_{F,\varepsilon}=\{ u \in \mathcal{U}(M) \mid \sum_{a \in F} \|ua-au\|_2 < \varepsilon \}$$
and we let
$$ C_{F,\varepsilon}=\conv \{ u \underline{x} u ^* \mid u \in U_{F,\varepsilon} \}, \quad C= \bigcap_{F,\varepsilon} C_{F,\varepsilon}.$$

Then $C$ is a non-empty convex weak$^*$-compact subset of $M$. Let $\underline{y}=(y_1,\dots, y_n) \in C$ be the unique element with minimal $\| \cdot \|_2$-norm. Observe that for every $u \in \mathcal{U}(P' \cap M^\omega)$, we have $\rE(u\underline{y}u^*) \in C$ where $\rE \colon \underline{M}^\omega \rightarrow \underline{M}$ is the canonical conditional expectation. Moreover, we have $\| \rE(u\underline{y}u^*) \|_2 \leq \| u\underline{y}u^*\|_2=\|\underline{y}\|_2$. Therefore, by minimality of $\|\underline{y}\|_2$, we must have $\underline{y}=u\underline{y}u^*=\rE(u\underline{y}u^*)$. This shows that $\underline{y} \in (P' \cap M^\omega)' \cap \underline{M}=\underline{P}$. But since $\underline{y} \in C$, we also have $\rE_P(y_i)=\rE_P(x_i)$ for all $i \leq n$. We conclude that $y_i=\rE_P(x_i)$ for all $i \leq n$. This shows that $\rE_P$ is the pointwise weak$^*$-limit of of convex combinations of inner automorphisms.

Now, $\rL^2(\langle M,P \rangle)$ contains a natural $M$-$M$-cyclic vector $\xi$ which satisfies $\langle x \xi y,\xi \rangle=\tau(x\rE_P(y))$ for all $x,y \in M$. But we have proved that $\tau(x\rE_P(y))$ can be approximated by convex combinations of $\tau(xuyu^*)=\langle x (u\hat{1})y, (u\hat{1}) \rangle$ where $\hat{1} \in \rL^2(M)$ and $u \in \mathcal{U}(M)$. Thus $_M \rL^2(\langle M ,P \rangle)_M \prec {_M} \rL^2(M)_M$.
\end{proof}

Finally, we will need the following proposition for the reduction to the tracial case.
\begin{prop} \label{core co-amenable}
Let $N \subset M$ be an inclusion of von Neumann algebras with a faithful normal conditional expectation $\rE_N \colon M \rightarrow N$ and let $c(N) \subset c(M)$ be the associated inclusion of continuous cores. If $N \subset M$ is co-amenable then $c(N) \subset c(M)$ is also co-amenable.
\end{prop}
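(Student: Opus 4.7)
My strategy is to exploit the amenability of $\R$ together with the identification of the continuous-core basic construction as a crossed product of $\langle M,N\rangle$. Picking a faithful normal semifinite weight $\varphi_0$ on $N$, set $\varphi:=\varphi_0\circ\rE_N$. By Takesaki's theorem on modular flows relative to a conditional expectation, $\sigma^\varphi$ preserves $N$ and restricts there to $\sigma^{\varphi_0}$, so $c(N)=N\rtimes_{\sigma^{\varphi_0}}\R$ embeds canonically into $c(M)=M\rtimes_{\sigma^\varphi}\R$. Because $\sigma^\varphi$ preserves $N$, the modular unitary $\Delta_\varphi^{\ri t}$ on $\rL^2(M)$ preserves $\rL^2(N)$ and hence commutes with the Jones projection $e_N$, so $\tilde\sigma_t:=\Ad(\Delta_\varphi^{\ri t})$ restricts to a flow on $\langle M,N\rangle$ extending $\sigma^\varphi$ and fixing $e_N$. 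Realising the crossed products on $\rL^2(M)\otimes\rL^2(\R)$, one checks that both $\langle c(M),c(N)\rangle$ and $\langle M,N\rangle\rtimes_{\tilde\sigma}\R$ are generated by $M$, $1\otimes\lambda(\R)$, and $e_N\otimes 1$ (the Jones projection for $c(N)\subset c(M)$), giving the identification
\[
\langle c(M),c(N)\rangle\;\cong\;\langle M,N\rangle\rtimes_{\tilde\sigma}\R.
\]

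Next I would lift the given conditional expectation $\Phi_0\colon\langle M,N\rangle\to M$ (supplied by co-amenability of $N\subset M$) to an $\R$-equivariant one. Fix an invariant mean $m$ on $L^\infty(\R)$ and put
\[
\Phi(x):=m_t\bigl[\sigma^\varphi_{-t}\circ\Phi_0\circ\tilde\sigma_t(x)\bigr],\qquad x\in\langle M,N\rangle,
\]
with the mean computed in the weak$^*$ topology of $M$ by applying $m$ to the scalar-valued functions $t\mapsto\psi(\sigma^\varphi_{-t}\Phi_0\tilde\sigma_t(x))$, $\psi\in M_*$. Translation-invariance of $m$ together with the continuity of $\sigma^\varphi$ yields $\Phi\circ\tilde\sigma_s=\sigma^\varphi_s\circ\Phi$, and $\Phi|_M=\id_M$ because $\Phi_0|_M=\id_M$ and $\sigma^\varphi$ preserves $M$. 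Hence $\Phi$ is an $\R$-equivariant (generally non-normal) conditional expectation from $\langle M,N\rangle$ onto $M$.

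Finally, I would extend $\Phi$ to a conditional expectation from $\langle c(M),c(N)\rangle\cong\langle M,N\rangle\rtimes_{\tilde\sigma}\R$ onto $c(M)=M\rtimes_{\sigma^\varphi}\R$. The two crossed products sit inside $\langle M,N\rangle\ovt\B(\rL^2(\R))$ and $M\ovt\B(\rL^2(\R))$ as the fixed-point subalgebras of the dual actions $\tilde\sigma\otimes\Ad\rho$ and $\sigma^\varphi\otimes\Ad\rho$, where $\rho$ is the right regular representation of $\R$. The $\R$-equivariance of $\Phi$ ensures that the algebraic amplification $\sum_i x_i\lambda(f_i)\mapsto\sum_i\Phi(x_i)\lambda(f_i)$ is a well-defined completely positive contractive map from $\langle M,N\rangle\rtimes_{\alg}\R$ onto $M\rtimes_{\alg}\R$, which restricts to the identity on the latter; extending it to the whole von Neumann crossed product produces the sought conditional expectation $\langle c(M),c(N)\rangle\to c(M)$, which is exactly the co-amenability of $c(N)\subset c(M)$. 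The main obstacle is precisely this last step, since $\Phi$ is typically non-normal and so the naive amplification $\Phi\ovt\id$ on the von Neumann tensor product is not directly available; one must instead rely on the fixed-point description, using that $\Phi$-slicing sends elements fixed by $\tilde\sigma\otimes\Ad\rho$ to elements fixed by $\sigma^\varphi\otimes\Ad\rho$, and a density argument on the algebraic crossed product to justify the passage to the von Neumann crossed product.
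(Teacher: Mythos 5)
Your first two steps are sound: the identification $\langle c(M),c(N)\rangle\cong\langle M,N\rangle\rtimes_{\tilde\sigma}\R$ and the averaging of $\Phi_0$ over an invariant mean to produce an $\R$-equivariant conditional expectation $\Phi\colon\langle M,N\rangle\to M$ are both standard and correct. The problem is exactly where you locate it: the last step. The algebraic crossed product $\langle M,N\rangle\rtimes_{\alg}\R$ is only $\sigma$-weakly dense in the von Neumann crossed product, and $\Phi$ is in general not normal, so a completely positive contraction defined on this dense $*$-subalgebra does not extend to the ambient von Neumann algebra by any density argument; non-normal positive maps defined on weakly dense C$^*$-subalgebras routinely fail to extend to the weak closure. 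The fixed-point variant you sketch (``$\Phi$-slicing'') could be made to work, but it presupposes that the non-normal conditional expectation $\Phi$ admits an amplification $\Phi\ovt\id$ to $\langle M,N\rangle\ovt\B(\rL^2(\R))$, and that this amplification can be chosen (or re-averaged over $\R$) to be equivariant; the existence of such amplifications of non-normal conditional expectations is itself a nontrivial fact that you would need to prove or cite, not a formal consequence of complete positivity. As written, the proof therefore has a genuine gap at its final step.

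The intended argument is much softer and bypasses all of this. Co-amenability is transitive: if $N\subset P$ and $P\subset M$ are co-amenable, composing the expectations $N'\to P'$ and $P'\to M'$ shows $N\subset M$ is co-amenable. It also passes to intermediate subalgebras: if $N\subset P\subset M$ and $N\subset M$ is co-amenable, restricting the expectation $N'\to M'$ to $P'$ shows $P\subset M$ is co-amenable. Since $\R$ is amenable, $M\subset c(M)=M\rtimes_{\sigma^\varphi}\R$ is co-amenable; combined with the co-amenability of $N\subset M$ this gives that $N\subset c(M)$ is co-amenable, and since $N\subset c(N)\subset c(M)$, the intermediate-subalgebra property yields that $c(N)\subset c(M)$ is co-amenable. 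Your construction is essentially an explicit unwinding of this composition, and the difficulty you hit at the end is precisely what these two formal properties of co-amenability let you avoid.
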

\begin{proof}
Since $N \subset M$ is co-amenable and $M \subset c(M)$ is co-amenable by Theorem \ref{group co-amenable}, we know that $N \subset c(M)$ is co-amenable. Since $N \subset c(N) \subset c(M)$, it follows that $c(N) \subset c(M)$ is co-amenable.
\end{proof}

\begin{proof}[Proof of Theorem \ref{co-amenable full}]
We first deal with the case where $M$ is a $\II_1$ factor. Fix a free ultrafilter $\omega \in \beta \N \setminus \N$. Let $P=(N' \cap M^\omega)' \cap M$. Then we have $P=(P' \cap M^\omega)' \cap M$. By Lemma \ref{expectation weakly inner}, we have $_M\rL^2(\langle M,P \rangle )_M \prec {_M}\rL^2(M)_M$. Since $N \subset M$ is co-amenable and $N \subset P$, we have that $P \subset M$ is still co-amenable. Therefore  $_M\rL^2(M)_M \prec {_M}\rL^2(\langle M,P \rangle )_M$. This shows that the $M$-$M$-bimodule $_M\rL^2(\langle M,P \rangle )_M$ is weakly equivalent to $_M\rL^2(M)_M$. Therefore, by Proposition \ref{full bimodule}, we have $_M\rL^2(M)_M \subset {_M}\rL^2(\langle M,P \rangle )_M$. Therefore, by Lemma \ref{intertwining}, there exists a non-zero $p \in P' \cap M$ such that $Pp \subset pMp$ is an irreducible subfactor with finite index. Since $pMp$ is full, we obtain $p(P' \cap M^\omega) p = \C p$ by Theorem \ref{full finite index}. In particular, we have $p \in N' \cap M$ and $p(N' \cap M^\omega)p=\C p$.

Now suppose that $M$ is of arbitrary type. Let $K$ be a full type $\III_1$ factor with separable predual and with trivial $\tau$-invariant (for example take $K$ the free Araki-Woods factor associated to the regular orthogonal representation of $\R$, see \cite{Sh98}). Let $M_1=M \ovt K$ and $N_1=N \ovt K$. Then by \cite{HMV16}, $M_1$ is a full type $\III_1$ factor with trivial $\tau$-invariant. Therefore by \cite{Ma16}, we know that the continuous core $c(M_1)$ is a full $\II_\infty$ factor. Now, pick a faithful normal state $\varphi$ on $N$ and extend it to $M$ by using a faithful normal conditional expectation $\rE_N\colon M \rightarrow N$. Define a faithful normal state on $M_1$ by $\varphi_1 =\varphi \otimes \psi$ where $\psi$ is any faithful normal state on $K$. Then we have a trace preserving inclusion of the continuous cores $c(N_1)=N_1 \rtimes_{\sigma^{\varphi_1}} \R \subset  c(M_1)=M_1 \rtimes_{\sigma^{\varphi_1}} \R$. The inclusion $c(N_1) \subset c(M_1)$ is co-amenable by Proposition \ref{core co-amenable}. Pick a non-zero finite trace projection $q \in L(\R)=\C \rtimes_\varphi \R \subset c(N_1)$. Observe that the central support of $q$ in $c(N_1)$ is $1$ because $N_1$ is of type $\III_1$. Therefore $qc(N_1)q$ is still a co-amenable subalgebra in the full $\II_1$ factor $qc(M_1)q$. Thus, by the first part of the proof, there exists a non-zero projection $p \in c(N_1)' \cap c(M_1)$ such that $pq(c(N_1)' \cap c(M_1)^\omega)pq=\C pq$. Since $q$ has central support $1$ in $c(N_1)$, this implies $p(c(N_1)' \cap c(M_1)^\omega)p=\C p$. Now, observe that $c(N_1)' \cap c(M_1)=N' \cap M_\varphi$ because $K$ is of type $\III_1$. Moreover, $N' \cap M^\omega_{\varphi^\omega} \subset c(N_1)' \cap c(M_1)^\omega$. Therefore, we have $p \in N' \cap M_\varphi$ and $p(N' \cap M^\omega_{\varphi^\omega})p=\C p$. Since $p(N' \cap M^\omega_{\varphi^\omega})p$ is the centralizer of the state $\varphi^\omega$ restricted to  $p(N' \cap M^\omega)p$, we know by \cite[Lemma 5.3]{AH12} that $p(N' \cap M^\omega)p$ is either trivial or a type $\III_1$ factor. But the latter case cannot happen because if $p(N' \cap M^\omega)p$ is of type $\III_1$, then for any $\varepsilon > 0$, we can find a Haar unitary $u \in p(N' \cap M^\omega)p$ such that $\| u\varphi^\omega-\varphi^\omega u\| \leq \varepsilon$ and by a diagonal extraction, this would imply that $p(N' \cap M^\omega_{\varphi^\omega})p \neq \C$. Thus we conclude that $p(N' \cap M^\omega)p= \C p$.
\end{proof}
\begin{rem} \label{rem spectral gap}
Note that the \emph{w-spectral gap} property for an inclusion of factors is weaker than the \emph{spectral gap} property in general (we refer the reader to the introduction of \cite{IV15} for the definitions and a discussion of this subtle difference). Thus, one might wonder wether in Corollary \ref{cor w-gap}, one can improve the w-spectral gap to a true spectral gap. The following example shows that the answer is negative in general. 

Let $\F_2 \curvearrowright (X,\mu)$ be a free ergodic pmp action which is \emph{weakly compact} (see \cite[Section 3]{OP07}) and strongly ergodic but does not have spectral gap. The weak compactness implies that the inclusion $L(\F_2) \subset \rL^\infty(X) \rtimes \F_2$ is co-amenable by \cite[Proposition 3.2]{OP07}. Since the action $\F_2 \curvearrowright (X,\mu)$ is strongly ergodic and $\F_2$ is non-inner amenable, the crossed product $\rL^\infty(X) \rtimes \F_2$ is full by \cite{Ch81}. However, since $\F_2 \curvearrowright (X,\mu)$ does not have spectral gap, the subfactor $L(\F_2)$ does not have spectral gap in $\rL^\infty(X) \rtimes \F_2$ (see the introduction of \cite{IV15}).

In order to construct such an action $\F_2 \curvearrowright (X,\mu)$, one can use the result \cite[Theorem A.3.2]{HK05} which was pointed to us by Adrian Ioana. We start from a compact free ergodic pmp action $\F_2=\langle a,b \rangle \curvearrowright (Y,\nu)$ which has spectral gap and such that $a$ acts ergodically on $(Y,\nu)$. Then we use \cite[Theorem A.3.2]{HK05} to obtain a new action $\F_2 \curvearrowright (X,\mu)$ which is orbit equivalent to $\F_2 \curvearrowright (Y,\nu)$ but does not have spectral gap. This new action $\F_2 \curvearrowright (X,\mu)$ is strongly ergodic and also weakly compact by \cite[Section 6]{Io08}, as we wanted.
\end{rem}

\section{Actions of compact groups on full factors}
Recall that an action $\sigma \colon G \curvearrowright M$ of a locally compact group $G$ on a factor $M$ is called \emph{outer} if $\sigma_g \notin \Inn(M)$ for all $g \in G \setminus \{1\}$. It is called \emph{strictly outer} if $M' \cap (M \rtimes_\sigma G)=\C$. Finally, it is called \emph{minimal} if it is faithful and $(M^G)' \cap M=\C$. A minimal action is strictly outer and a strictly outer action is outer, but the reverse implications are not true in general. When $G$ is discrete, $\sigma$ is outer if and only if it is strictly outer. When $G$ is compact, $\sigma$ is minimal if and only if it is strictly outer. We refer the reader to \cite{Va05} for further details regarding these notions.

In order to prove Theorem \ref{minimalGaction}, we will first need to prove the following dichotomy:
\begin{thm} \label{minimal or diffuse}
Let $\sigma \colon G \curvearrowright M$ be an action of a compact group $G$ on a factor $M$. Suppose that $\sigma$ is outer. Then $(M^G)' \cap M$ is either trivial or diffuse.
\end{thm}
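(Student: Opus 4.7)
The plan is to argue by contradiction: I assume $B := (M^G)' \cap M$ contains a non-zero minimal projection $p$, and aim to deduce that some $\sigma_g$ ($g \ne e$) is inner, contradicting outerness (unless $B = \C$).

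\smallskip

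\emph{Step 1: Reduction to an atomic transitive case.} Averaging an arbitrary faithful normal state on $M$ over the Haar measure of the compact group $G$ yields a $G$-invariant faithful normal state $\varphi$ on $M$; its restriction to the $G$-invariant subalgebra $B$ is again faithful, normal and $G$-invariant. Two distinct minimal central projections of $B$ are mutually orthogonal and share the same positive $\varphi$-mass along a single $G$-orbit, so the orbit of the central support $z \in Z(B)$ of $p$ is finite, say $\{z_1,\dots,z_k\}$. The sum $Z := z_1 + \cdots + z_k$ is a non-zero $G$-invariant central projection of $B$, lying in $B^G = (M^G)' \cap M^G = Z(N)$ and in particular in $N \cap N'$. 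Compressing to the corner $ZMZ$ (which remains a factor, since $Z(M) = \C$, and inherits an action of $G$), one obtains $(ZMZ)^G = ZNZ$ and the relative commutant $(ZNZ)' \cap ZMZ = ZBZ = \bigoplus_{i=1}^k \mathcal B(K_i)$, a finite direct sum of type I factors transitively permuted by $G$.

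\smallskip

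\emph{Step 2: Extracting projective unitary data.} Let $H \le G$ be the (closed, index-$k$) stabilizer of $B_1 := \mathcal B(K_1)$. The action $H \curvearrowright B_1$ by automorphisms is implemented by a projective unitary representation $u \colon H \to U(K_1)$, so $\sigma_h|_{B_1} = \Ad(u_h)$. The key elementary observation is that each $u_h$ lies in $B_1 \subset B \subset N'$, so the inner automorphism $\Ad(u_h)$ of $M$ is trivial on $N$, matching $\sigma_h|_N = \id$. Extending $u_h$ by the identity on $1 - z_1$ gives a unitary $\tilde u_h \in M$ with $\Ad(\tilde u_h)$ agreeing with $\sigma_h$ on both $N$ and $B_1$.

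\smallskip

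\emph{Step 3: Main obstacle.} The remaining step is to promote the partial agreement above to a full equality $\sigma_g = \Ad(V_g)$ on all of $M$, thereby producing an inner implementation of $\sigma_g$ and contradicting outerness. I expect this to be the heart of the proof, and the difficulty is twofold. First, $\tilde u_h$ acts as the identity on the other summands $B_j$ ($j \ne 1$), while $\sigma_h$ may act non-trivially there whenever $H$ does not fix $B_j$, so further intertwining data between the $B_j$'s is needed. Second, producing the required partial isometries between the $B_j$'s inside $M$ (rather than only in $\mathcal B(L^2 M)$) is exactly where outerness of $\sigma$ obstructs the naive construction: a unitary $v \in M$ with $v B_j v^* = B_{j'}$ implementing $\sigma_{g_i}|_{B_j}$ need not exist when $\sigma_{g_i}$ is outer. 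My plan to navigate these difficulties is to use the surjection $\pi \colon M \rtimes G \twoheadrightarrow \langle M, N\rangle$ from Section~2 together with the cocycle of the projective representation $u$ (passing to a central extension $\widetilde G \to G$ if necessary), assembling the available partial data into a genuine unitary representation of $\widetilde G$ in $M$ lifting $\sigma$. Executing this construction rigorously — and verifying that it really does force an inner implementation of $\sigma_g$ unless $k = 1$ and $K_1 = \C$ — is the technical heart of the argument.
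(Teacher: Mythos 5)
Your Steps 1 and 2 are essentially sound, but the argument has a genuine gap exactly where you place it, in Step 3, and I do not believe the strategy of promoting $\Ad(\tilde u_h)$ to an inner implementation of $\sigma_h$ can be completed. The unitary $\tilde u_h$ lies in $B=(M^G)'\cap M$, so $\Ad(\tilde u_h)$ agrees with $\sigma_h$ only on the subalgebra generated by $M^G$ and $B_1$, which is in general a proper subalgebra of $M$; two automorphisms agreeing there need not agree on $M$. Concretely, in the case where $Q=(M^G)'\cap M$ is a finite-dimensional factor one has $M=M_0\otimes Q$ with $M_0=Q'\cap M$, and writing $\sigma_g=\Ad(u_g)\circ\theta_g$ with $u\colon G\to\mathcal{U}(Q)$ projective and $\theta_g$ acting trivially on $Q$, the component $\theta_g|_{M_0}$ has no reason whatsoever to be inner. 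So no $\sigma_g$, $g\neq e$, is forced to be inner by the data you have assembled, and the cocycle/central-extension bookkeeping you propose only reorganizes that data without touching the action on $M_0$. (The remark following the theorem in the paper does assert that in the atomic case $B$ is generated by unitaries implementing some $\sigma_g$'s, so your target conclusion is \emph{true}; but it is stated there as a consequence of the theorem, not as an available stepping stone towards it.) A second, smaller gap: your Step 1 produces summands $\mathcal{B}(K_i)$ that could a priori be infinite-dimensional type I factors; the paper needs (and proves, in Lemma 4.3, via an ergodicity and unique-trace argument in the spirit of Hoegh-Krohn--Landstad--Stormer) that a non-diffuse $(M^G)'\cap M$ is automatically a multi-matrix algebra.

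The paper's proof runs along a different line, and the contradiction it extracts is not innerness of some $\sigma_g$ but a representation-theoretic one. In the factorial case $Q\neq\C$ it observes that $\sigma_0\colon G\actson M_0=Q'\cap M$ is minimal, picks an irreducible $G$-subrepresentation $V\subset Q\ominus\C$, uses minimality to locate a copy of the contragredient $\overline{V}$ inside $M_0$, and thereby produces a copy of the trivial representation inside $M\ominus M_0$, contradicting $M^G\subset M_0$. The general case is then reduced to the factorial one by minimizing the dimension of $e((M^K)'\cap M)e$ over open subgroups $K\subset G$ and projections $e\in M^K$ (this is where finite-dimensionality is essential), and finally passing from $K$ back to $G$ via a Fourier decomposition over $G/K$ together with outerness. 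To complete your approach you would need a substitute for the step that forces the trivial representation to appear outside $M_0$; as it stands, Step 3 is the entire content of the theorem and remains unproved.
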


The following lemma is an easy consequence of the Peter-Weyl theorem.
\begin{lem}
Let $M$ be a von Neumann algebra. The following are equivalent:
\begin{itemize}
\item [$(\rm i)$] $M$ is a multi-matrix algebra, i.e.\ $M \cong \bigoplus_{i \in I} M_{n_i}(\C)$ for some (possibly infinite) family of integers $n_i \geq 1, \: i \in I$.
\item [$(\rm ii)$] The unitary group of $M$ is compact.
\item [$(\rm iii)$] $M$ is generated by a compact group of unitaries.
\end{itemize}
\end{lem}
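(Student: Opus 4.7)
The plan is the cycle (i) $\Rightarrow$ (ii) $\Rightarrow$ (iii) $\Rightarrow$ (i), with only the last implication requiring real content. For (i) $\Rightarrow$ (ii), if $M \cong \bigoplus_{i \in I} M_{n_i}(\C)$ then $U(M) = \prod_{i \in I} U(n_i)$, and on $U(M)$ the strong operator topology coincides with the product topology: one direction by testing SOT-convergence against $\rL^2$-vectors supported in a single block of $\rL^2(M) = \bigoplus_i \rL^2(M_{n_i}(\C))$, the other by dominated convergence using $\|u_i\| \leq 1$ to bound the summands $\|(u_i^{(\alpha)}-u_i)\xi_i\|^2$ by the summable $(4\|\xi_i\|^2)_i$. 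Compactness is then Tychonoff. The implication (ii) $\Rightarrow$ (iii) is immediate since $U(M)$ always generates $M$ as a von Neumann algebra.

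For the main implication (iii) $\Rightarrow$ (i), let $K \subset U(M)$ be a compact subgroup generating $M$. Realize $M$ in its standard form and view $K \hookrightarrow U(\rL^2(M))$ as a continuous unitary representation of $K$; it integrates to a non-degenerate $*$-representation $\pi \colon C^*(K) \to \B(\rL^2(M))$ with image contained in $M$ and SOT-dense in $M$. By the Peter-Weyl theorem, $C^*(K)$ is isomorphic to the $c_0$-direct sum $\bigoplus_{\sigma \in \hat K} M_{d_\sigma}(\C)$, with minimal central projections $e_\sigma \in C^*(K)$ onto each block summing to $1$ strictly in the multiplier algebra. Non-degeneracy of $\pi$ then gives that the projections $p_\sigma := \pi(e_\sigma)$ are pairwise orthogonal central projections of $M$ whose SOT-sum is $1_M$. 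Since the matrix coefficients of $\sigma$ linearly span $M_{d_\sigma}(\C) \subset C^*(K)$, the corner $p_\sigma M$ is the von Neumann algebra generated by the finite-dimensional (hence already weakly closed) algebra $\pi(M_{d_\sigma}(\C))$, yielding $p_\sigma M \cong M_{d_\sigma}(\C)$ whenever $p_\sigma \neq 0$. Setting $S := \{\sigma \in \hat K : p_\sigma \neq 0\}$ we conclude
\[
M = \bigoplus_{\sigma \in S} p_\sigma M \cong \bigoplus_{\sigma \in S} M_{d_\sigma}(\C).
\]

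The main subtlety is the $c_0$-versus-$\ell^\infty$ passage from $C^*(K)$ to the von Neumann algebra it generates. Although $\sum_\sigma e_\sigma$ does not converge in $C^*(K)$ when $\hat K$ is infinite, non-degeneracy of the integrated representation (which is automatic for continuous unitary group representations) forces $\sum_\sigma p_\sigma = 1_M$ in SOT, while finite-dimensionality of each block pins down $p_\sigma M$ as a single matrix algebra rather than something larger. Together these two facts deliver the full $\ell^\infty$-direct sum decomposition.
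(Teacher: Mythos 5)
Your proof is correct, and it follows exactly the route the paper indicates: the paper states this lemma without proof, remarking only that it is ``an easy consequence of the Peter--Weyl theorem,'' and your implication $(\rm iii)\Rightarrow(\rm i)$ is precisely a careful Peter--Weyl argument (with the remaining implications handled by Tychonoff and triviality, as they should be). The points you flag as subtle --- non-degeneracy of the integrated representation forcing $\sum_\sigma p_\sigma = 1$ in the strong topology, and simplicity of each block pinning down $p_\sigma M$ --- are indeed the only places requiring care, and you handle them correctly.
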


\begin{lem} \label{compact multi-matrix}
Let $\sigma \colon G \curvearrowright M$ be an action of a compact group $G$ on a factor $M$. Then the following are equivalent.
\begin{itemize}
\item [$(\rm i)$] $M' \cap (M \rtimes G)$ is a multi-matrix algebra.
\item [$(\rm i)'$] $(M^G)' \cap M$ is a multi-matrix algebra.
\item [$(\rm ii)$] $M' \cap (M \rtimes G)$ is completely atomic.
\item [$(\rm ii)'$] $(M^G)' \cap M$ is completely atomic.
\item [$(\rm iii)$] $M' \cap (M \rtimes G)$ is not diffuse.
\item [$(\rm iii)'$] $(M^G)' \cap M$ is not diffuse.
\end{itemize}
\end{lem}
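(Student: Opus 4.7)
The implications $(\mathrm{i})\Rightarrow(\mathrm{ii})\Rightarrow(\mathrm{iii})$ and $(\mathrm{i})'\Rightarrow(\mathrm{ii})'\Rightarrow(\mathrm{iii})'$ are immediate, since every multi-matrix algebra is completely atomic and every completely atomic algebra is not diffuse. The task is therefore to close the cycle by proving the reverse implications together with a bridge between the primed and unprimed versions.

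The bridge uses the standard form together with the canonical surjection $\Psi\colon M \rtimes G \twoheadrightarrow \langle M, M^G \rangle$ noted in the excerpt. Since $\langle M,M^G\rangle = (JM^GJ)'$ in the standard form $(M,\rL^2(M),J,\rL^2(M)^+)$, we have
\[
M' \cap \langle M, M^G \rangle \;=\; JMJ \cap (JM^GJ)' \;=\; J\bigl((M^G)' \cap M\bigr)J,
\]
so $(M^G)' \cap M$ is anti-isomorphic to $M' \cap \langle M, M^G \rangle$, and in particular conditions $(\mathrm{i})',(\mathrm{ii})',(\mathrm{iii})'$ are the same three conditions on $M' \cap \langle M, M^G \rangle$. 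Applying $\Psi$ (which fixes $M$ pointwise) together with $G$-averaging against the compact group $G$ to lift elements of the commutant back to the crossed product, I would verify that $\Psi$ restricts to a normal $*$-surjection $M' \cap (M \rtimes G) \twoheadrightarrow M' \cap \langle M, M^G \rangle$. Moreover $\ker\Psi$ is a weak$^*$-closed ideal of $M \rtimes G$, hence cut off by a central projection, and the induced splitting restricts to $M' \cap (M \rtimes G)$. Since all three properties (multi-matrix, completely atomic, not diffuse) pass to normal quotients, to direct summands, and to anti-isomorphic copies, the primed and unprimed conditions become equivalent.

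For the key implication $(\mathrm{iii})'\Rightarrow(\mathrm{i})'$, I would decompose $(M^G)' \cap M$ isotypically under the compact action $\sigma$, writing $(M^G)' \cap M = \bigoplus_{\pi \in \hat G} K_\pi$ with $K_{\mathrm{triv}} = ((M^G)' \cap M)^G = Z(M^G)$; each $K_\pi$ is a $G$-equivariant $Z(M^G)$-module of type $\pi$, so $\dim V_\pi = d_\pi$ bounds the ``fiber'' over $Z(M^G)$. Under the ``not diffuse'' hypothesis, pick a minimal projection $p \in (M^G)' \cap M$. Its $G$-orbit consists of minimal projections with compact closure $G/\Stab(p)$, and the sum of this orbit is $G$-invariant, hence lies in $Z(M^G)$. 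Averaging $p$ over $G$ yields a central projection $z$ under which $z((M^G)' \cap M)z$ is a matrix block of size at most $d_{\pi(p)}$; iterating on the complement exhausts the algebra as a direct sum of such blocks, proving that $(M^G)' \cap M$ is multi-matrix. Applying the lemma preceding this one (unitary group compact iff multi-matrix) then yields $(\mathrm{i})'$.

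\textbf{Main obstacle.} The most delicate point is forcing the atomic-part projection $z_{\mathrm{at}} \in Z(M^G)$ to be $1$: a priori, $(M^G)' \cap M$ could split as an atomic part plus a diffuse part both fixed by $\sigma$, and one must rule out the diffuse summand under the ``not diffuse'' hypothesis. This is precisely where the compactness of $G$ enters, through the fact that the isotypic components $K_\pi$ are $d_\pi$-bounded modules over $Z(M^G)$, so a non-zero minimal projection spreads via the $G$-orbit across enough of the algebra to force every central projection in $Z(M^G)$ to be atomic. Handling this uniformly across $\hat G$ without assuming outerness of $\sigma$ is the technical heart of the argument.
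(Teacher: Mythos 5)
There are two genuine gaps. First, your bridge between the primed and unprimed conditions does not close. The Paschke surjection $\Psi\colon M\rtimes G\rightarrow\langle M,M^G\rangle$ can only transfer ``multi-matrix'' and ``completely atomic'' \emph{from} $M'\cap(M\rtimes G)$ \emph{to} $(M^G)'\cap M$, and ``diffuse'' from $M'\cap(M\rtimes G)$ to its quotient; your assertion that all three properties ``pass to normal quotients'' is false for ``not diffuse'' (the algebra $\C\oplus\rL^\infty[0,1]$ is not diffuse but has a diffuse central corner), and even if it were true it would only yield the implications $(\rm i)\Rightarrow(\rm i)'$, $(\rm ii)\Rightarrow(\rm ii)'$ and $(\rm iii)'\Rightarrow(\rm iii)$, never their converses. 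Those converses are exactly where the real work lies: the paper proves $(\rm i)'\Rightarrow(\rm i)$ using the identity $(M^G)'\cap(M\rtimes G)=((M^G)'\cap M)\rtimes G$ together with the compactness of the unitary group of a multi-matrix algebra, and it obtains $(\rm iii)\Rightarrow(\rm ii)$ by applying the already-established primed implications to the amplified action $\sigma\otimes\rho\colon G\curvearrowright N=M\ovt\B(\rL^2(G))$, for which $(N^G)'\cap N$ is anti-isomorphic to $M'\cap(M\rtimes G)$. Neither idea appears in your plan, so the unprimed conditions are never shown to follow from the primed ones.

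Second, your key implication $(\rm iii)'\Rightarrow(\rm i)'$ is not actually proved: you yourself flag the crucial point --- that the atomic central support equals $1$ --- as an unresolved ``main obstacle,'' and the proposed mechanism is not sound as written. The ``sum of the $G$-orbit'' of a continuous family of non-orthogonal minimal projections is undefined, and $\int_G\sigma_g(p)\,dg$ is a positive element, not a projection; one must pass to its support, and then the finite-dimensionality of the corresponding corner is precisely what remains to be established. The paper separates the two difficulties cleanly: for $(\rm ii)'\Rightarrow(\rm i)'$ it notes that on each corner $((M^G)'\cap M)p$, with $p$ minimal in $\mathcal{Z}(M^G)$, the $G$-action is ergodic, so the canonical semifinite trace of an atomic algebra (normalized on minimal projections) is $G$-invariant and hence proportional to the restriction of the $G$-equivariant expectation, forcing finite dimensionality; and for the dichotomy $(\rm iii)'\Rightarrow(\rm ii)$ it cuts by the maximal atomic central projection $z\in\mathcal{Z}(M^G)$, runs the whole cycle for the truncated action on $zMz$, and uses factoriality of $M$ to see that $(M'\cap(M\rtimes G))z\cong M'\cap(M\rtimes G)$. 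Your isotypic-decomposition sketch is in the spirit of the Hoegh-Krohn--Landstad--Stormer multiplicity bounds, which the paper cites as an alternative route to $(\rm ii)'\Rightarrow(\rm i)'$, but by itself it does not exclude a diffuse summand coexisting with an atomic one, and that exclusion is the heart of the lemma.
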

\begin{proof}
We will fix the notation $N=M \ovt \B(\rL^2(G))$ and consider the action $\sigma \otimes \rho  \colon G \curvearrowright N$ where $\rho \colon G \curvearrowright \B(\rL^2(G))$ is the action coming from the right regular representation as in the preliminary section. Recall that we have an isomorphism $\pi \colon M \rtimes_\sigma G \rightarrow N^G$ which extends to an isomorphism from $\langle M \rtimes_\sigma G, M \rangle$ to $N$. Since $G$ is compact, we also observe that if $e_G \in \B(\rL^2(G))$ is the rank-one projection on the scalar functions, then $e_G$ is invariant under $\rho$ and $\sigma \otimes \rho$ restricts to $\sigma$ on $e_GNe_G=M$.

The implications $(\rm i) \Rightarrow (\rm ii) \Rightarrow (\rm iii)$ and $(\rm i)' \Rightarrow (\rm ii)' \Rightarrow (\rm iii)'$ are obvious. The implications $(\rm i) \Rightarrow (\rm i)'$, $(\rm ii) \Rightarrow  (\rm ii)'$ and $(\rm iii)' \Rightarrow (\rm iii)$ are also easy. Indeed, this follows from \cite[Theorem 4.2]{Pa77} which says that there is a surjective $*$-homomorphism from $M \rtimes G$ onto $\langle M, M^G \rangle$ which is the identity on $M$. Hence $M' \cap (M \rtimes G)$ surjects onto $M' \cap \langle M, M^G \rangle \cong ((M^G)' \cap M)^{\op}$. We conclude that if $M' \cap (M \rtimes G)$ is a multi-matrix algebra (resp.\ completely atomic, resp.\ diffuse) then $(M^G)' \cap M$ is also a multi-matrix algebra (resp.\ completely atomic, resp.\ diffuse).


$(\rm i)' \Rightarrow  (\rm i)$. Observe that $(M^G)' \cap (M \rtimes G)=((M^G)' \cap M) \rtimes G$ (this follows from the fact that $\pi(M^G)=M^G \otimes \C \subset M \ovt \B(\rL^2(G))$). Since $(M^G)' \cap M$ is a multi-matrix algebra then its unitary group $\mathcal{U}((M^G)' \cap M)$ is compact. Therefore, $(M^G)' \cap M) \rtimes G$ is generated by the compact group $\mathcal{U}((M^G)' \cap M) \rtimes G$, hence $(M^G)' \cap (M \rtimes G)$ is a multi-matrix algebra. In particular $M' \cap (M \rtimes G)$ is a multi-matrix algebra.

$(\rm ii)' \Rightarrow (\rm i)'$. This can be deduced from \cite{HLS81} but we provide an easy proof in our specific situation. Take $p$ a minimal projection in the center of $M^G$. Then the action of $G$ on the algebra $D=((M^G)' \cap M))p$ is ergodic, i.e.\ $D^G=\C$. Since $D$ is completely atomic, there exists a unique faithful semifinite trace $\tau$ on $D$ such that $\tau(e)=1$ for every minimal projection $e \in D$. Since $\tau$ is unique, it must be $G$-invariant. But the canonical $G$-invariant conditional expectation $E \colon M \rightarrow M^G$ already restricts to a $G$-invariant state $\psi$ on $D$. Since $D^G=\C$, this forces $\tau$ to be proportional to $\psi$. In particular, $\tau$ is finite and $D$ must be finite dimensional. This holds for every minimal projection $p \in \mathcal{Z}(M^G)$ hence $(M^G)' \cap M$ is a direct sum of finite dimensional algebras.

$(\rm iii)' \Rightarrow (\rm ii)$. Let $z$ be the maximal central projection in $(M^G)' \cap M$ such that $((M^G)' \cap M)z$ is completely atomic. Then $z$ must be $G$-invariant, hence $z \in \mathcal{Z}(M^G)$. Consider the truncated action $\sigma' \colon G \curvearrowright Q=zMz$. Then $(Q^G)' \cap Q$ is completely atomic. Hence, by $(\rm ii)' \Rightarrow (\rm i)' \Rightarrow (\rm i) \Rightarrow (\rm ii)$, we have that $Q' \cap (Q \rtimes G)$ is completely atomic. But we have $Q' \cap (Q \rtimes G)=(M' \cap (M \rtimes G))z$ and $(M' \cap (M \rtimes G))z$ is isomorphic to $M ' \cap (M \rtimes G)$ because $z \in M$ and $M$ is a factor. Therefore $M' \cap (M \rtimes G)$ is completely atomic.

$(\rm iii) \Rightarrow (\rm ii)$. If $M' \cap (M \rtimes G)$ is not diffuse, then $(N^G)' \cap N$, which is anti-isomorphic to it, is also not diffuse. Then the implications $(\rm iii)' \Rightarrow (\rm ii) \Rightarrow (\rm ii)'$ show that $(N^G)' \cap N$ is completely atomic, hence $M' \cap (M \rtimes G)$ is completely atomic.

The reader may now check that all the implications we proved form a connected graph, so that all the properties are equivalent.
\end{proof}

\begin{proof}[Proof of Theorem \ref{minimal or diffuse}]
 
Suppose that $(M^G)' \cap M$ is not diffuse. By Lemma \ref{compact multi-matrix}, we know that $(M^G)' \cap M$ is a multi-matrix algebra. We want to show that $\sigma$ is minimal, i.e.\ that $(M^G)' \cap M=\C$.

We first deal with the case where $(M^G)' \cap M$ is a factor. In this case, since $Q=(M^G)' \cap M$ is a finite dimensional factor, we have $M=M_0 \otimes Q$ where $M_0 =Q' \cap M$. Observe that the action $\sigma_0 \colon G \curvearrowright M_0$ is minimal. Indeed, by construction we have $M^G=(M_0)^G$ and $(M^G)' \cap M_0=\C$, and $\sigma_0$ is outer (hence faithful) because $\sigma$ is outer. Now, suppose that $Q \neq \C$ and consider the representation of $G$ on $Q \ominus \C$ induced by the action $G \curvearrowright Q$. Choose $V \subset Q \ominus \C$ an irreducible subrepresentation. Since $\sigma_0$ is minimal, we can find a copy of the contragredient representation $\overline{V} \subset M_0$. Then $\overline{V}  \otimes V \subset M_0 \otimes (Q \ominus \C)=M \ominus M_0$ contains a copy of the trivial representation of $G$. But this contradicts the fact that $M^G \subset M_0$. We conclude that $Q=\C$, hence $\sigma$ is minimal.


Now, we deal with the general case where $(M^G)' \cap M$ is not necessarily a factor. Among all open subgroups $K \subset G$ and all non-zero projections $e \in M^K$, we choose a pair $(K,e)$ such that the dimension of $e((M^K)' \cap M)e$ is minimal. Take $f$ a minimal projection in the center of $e((M^K)' \cap M)e$. Then there exists an open subgroup $H \subset K$ which stabilizes $f$. Since $f((M^H)' \cap M)f \subset e((M^K)' \cap M)e$, by minimality of $(K,e)$, we must have $f((M^H)' \cap M)f =e((M^K)' \cap M)e$ hence $f=e$. This shows that $e((M^K)' \cap M)e=((eMe)^K)' \cap eMe$ is a factor. But the action of $K$ on $eMe$ is outer (because $K \curvearrowright M$ is outer and $M$ is a factor). Since we already treated the factorial case, we get that $K \curvearrowright eMe$ is minimal. Therefore $K \curvearrowright eMe$ is strictly outer, i.e.\ $(M' \cap (M \rtimes K))e=(eMe)' \cap (eMe \rtimes K)=\C e$. Since $M$ is a factor, this implies that $M' \cap (M \rtimes K)=\C$, i.e.\ $K \curvearrowright M$ is strictly outer. Finally, using the fact that $K$ is open in $G$, we will conclude that $G \curvearrowright M$ is also strictly outer, hence minimal. Indeed, since $K$ is open, any $x \in M \rtimes G$ has a ``Fourier decomposition" $x=\sum_{g \in G/K} u_g x_g$ where $x_g \in M \rtimes K$ (we implicitly choose a set of representatives of each equivalence class in $G/K$). We refer to \cite{BB16} for an explanation of this ``Fourier decomposition". If $x \in M' \cap (M \rtimes G)$, we must have $x_g a =\sigma^{-1}_g(a)x_g$ for all $a \in M$. By \cite[Corollary 3.11]{BB16}, and since $\sigma$ is outer, this is only possible if $g \in K$. Therefore $M' \cap (M \rtimes G) = M' \cap (M \rtimes K)=\C$ as we wanted.
\end{proof}

\begin{rem} Let $\sigma \colon G \curvearrowright M$ be an action of a compact group $G$ on a factor $M$. Let $H=\{g \in G \mid \sigma_g \text{ is inner} \}$. Then one can deduce from Theorem \ref{minimal or diffuse} that either $(M^G)' \cap M$ is diffuse or $$(M^G)' \cap M=(M^H)' \cap M=\{ u \in M \mid \exists g \in G, \: \Ad(u)=\sigma_g \}''.$$
\end{rem}

\begin{proof}[Proof of Theorem \ref{minimalGaction}]
Suppose first that $M^G \subset M$ is co-amenable. Since $M$ is full, then by Theorem \ref{co-amenable full}, $(M^G)' \cap M$ is not diffuse and by combining with Theorem \ref{minimal or diffuse} it must be trivial. We conclude that $(M^G)' \cap M^\omega=\C$ by Theorem \ref{co-amenable full}. Next, since we have $\pi(M \rtimes G)$ is with expectation in $M \ovt \B(\rL^2(G))$, we have $\pi(M \rtimes G)^\omega \subset M^\omega \ovt \B(\rL^2(G))$. Since $\pi(M^G)=M^G \otimes \C$, we get $\pi (M ' \cap (M \rtimes G)^\omega) \subset ((M^G)' \cap M^\omega) \ovt \B(\rL^2(G))=\C \otimes \B(\rL^2(G))$. We conclude that $M' \cap (M \rtimes G)^\omega=M' \cap (M \rtimes G)=\C$.

Now, in the general case, consider the action $\sigma \otimes \rho \colon G \curvearrowright N=M \ovt \B(\rL^2(G))$. Then the inclusion $N^G \subset N$ which is isomorphic to $M \rtimes G \subset \langle M \rtimes G, M\rangle$  is co-amenable by Theorem \ref{group co-amenable}. So we can apply the first case to deduce that $\sigma \otimes \rho$ is minimal, hence $\sigma$ is also minimal. In particular, $M^G \subset M$ is co-amenable and we conclude by the first case.
\end{proof}

\appendix

\section{Bimodule characterization of co-amenability}

Let $M$ be a von Neumann algebra. 
We will denote by $\rL^2(M)$ the standard Hilbert space for $M$,  
by $J_M$ the modular conjugation, by $\rL^2(M)^+$ the natural positive cone, 
and by $\pi_M \colon M  \odot M\op \rightarrow \B(\rL^2(M))$ the natural binormal $*$-repre\-sen\-tation. We define the norm 
$$ M \odot M\op \ni T \mapsto \| T \|_{\pi_M}=\| \pi_M(T) \|_{\B(\rL^2(M))}.$$
For $x \in M$, we will use the notation $\overline{x}=(x^*)\op=(x\op)^* \in M\op$. 

Haagerup has made a deep study of the norm 
\[
M^{\oplus n} \ni (x_1, \ldots, x_n) \mapsto  \|\sum_i x_i \otimes \overline{x_i} \|_{\pi_M}
\]
and obtained many remarkable results in \cite{Ha93}, 
but we need only the following (Theorem 2.1 in \cite{Ha93}). 
This was proved earlier in the semifinite case by Pisier (Theorem 2.1 in \cite{Pi95}).
Alternative proofs are found in \cite{JP10} and \cite{Xu05}. 

\begin{thm}[Pisier--Haagerup]\label{thm:haagerup}
For any von Neumann algebra $M$ and any $x_1,\ldots,x_n\in M$,  
\[  
\|\sum_i x_i \otimes \overline{x_i} \|_{\pi_M}^{1/2}=\|(x_1,\ldots,x_n)\|_{1/2},
\]
where the latter norm is the complex interpolation of equivalent norms 
$\|\,\cdot\,\|_{0}$ and $\|\,\cdot\,\|_{1}$ 
on $M^{\oplus n}$ that are given by  
\[
\|(a_1,\ldots,a_n)\|_{0}:=\|\sum_i a_ia_i^*\|^{1/2}\mbox{ and }
\|(a_1,\ldots,a_n)\|_{1}:=\|\sum_i a_i^*a_i\|^{1/2}.
\]
In particular, for any (not necessarily normal) 
unital completely positive map $\theta$ from 
$M$ into a von Neumann algebra $N$ and any $x_1,\ldots,x_n\in M$, 
\[
 \|\sum_i \theta(x_i) \otimes \overline{\theta(x_i)} \|_{\pi_N}
\le  \|\sum_i x_i \otimes \overline{x_i} \|_{\pi_M}.
\]
\end{thm}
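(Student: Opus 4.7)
The plan is to establish the norm equality via bilinear complex interpolation, reducing it to two elementary row/column factorization estimates; the ``in particular'' inequality then follows as a standard consequence.

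For the upper bound $\|\sum_i x_i \otimes \overline{x_i}\|_{\pi_M} \le \|(x_i)\|_{1/2}^2$, I would consider the form
$$ S(x,y) := \pi_M\Bigl(\sum_i x_i \otimes \overline{y_i}\Bigr) = \sum_i x_i J y_i J \in \B(\rL^2(M)), $$
which is linear in $x$ and antilinear in $y$. Writing $S(x,y)$ as the product of the row operator $[x_1 \cdots x_n] \colon \rL^2(M)^{\oplus n} \to \rL^2(M)$ with the column $[Jy_1J,\ldots,Jy_nJ]^T \colon \rL^2(M) \to \rL^2(M)^{\oplus n}$ yields $\|S(x,y)\| \le \|(x_i)\|_0\,\|(y_i)\|_1$. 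Since $x_i \in M$ commutes with $Jy_j J \in M'$, factoring in the opposite order gives the complementary bound $\|S(x,y)\| \le \|(y_i)\|_0\,\|(x_i)\|_1$. After linearizing the antilinear $y$-dependence by passing to the conjugate Banach space, these are two contractive bilinear estimates. Bilinear Calder\'on interpolation at parameter $1/2$, together with the symmetry $[E_0,E_1]_\alpha = [E_1,E_0]_{1-\alpha}$ of complex interpolation, then produces $\|S(x,y)\| \le \|(x_i)\|_{1/2}\,\|(y_i)\|_{1/2}$, and setting $y = x$ gives the upper bound.

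The reverse inequality $\|(x_i)\|_{1/2}^2 \le \|\sum_i x_i \otimes \overline{x_i}\|_{\pi_M}$ is the harder direction, and is where I expect the main obstacle. Writing $C := \|\sum_i x_i \otimes \overline{x_i}\|_{\pi_M}^{1/2}$, one must exhibit an analytic function $f \colon \overline{\Sigma} \to M^{\oplus n}$ on the strip $\Sigma = \{0 < \mathrm{Re}(z) < 1\}$ satisfying $f(1/2) = (x_i)$ and boundary bounds $\|f(it)\|_0, \|f(1+it)\|_1 \le C$. The natural construction is modular-theoretic: reduce first to the $\sigma$-finite case, fix a faithful normal state $\varphi$ on $M$, approximate the $x_i$ by elements analytic for the modular flow $\sigma^\varphi$, and take the family $f_i(z) := \sigma^\varphi_{-i(z-1/2)}(x_i)$. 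Via the KMS condition, the boundary norms $\|f(it)\|_0^2$ and $\|f(1+it)\|_1^2$ can be rewritten as matrix coefficients of $\sum_i x_i J x_i J$ against vectors in $\rL^2(M)$ built from $\xi_\varphi$ and appropriate powers of $\Delta^{1/2}$. The technical crux, and the main obstacle, is obtaining uniform control of these coefficients by $C^2$; this is what forces the invocation of Tomita--Takesaki machinery. Streamlined alternative arguments can be extracted from \cite{JP10} or from the Pisier--Xu noncommutative $L^p$ framework \cite{Pi95, Xu05}.

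For the ``in particular'' clause, any unital completely positive $\theta \colon M \to N$ is completely contractive, so viewing $(x_1,\ldots,x_n)$ as a row (resp.\ column) matrix over $M$ gives $\|(\theta(x_i))\|_0 \le \|(x_i)\|_0$ and $\|(\theta(x_i))\|_1 \le \|(x_i)\|_1$. Applying Riesz--Thorin complex interpolation to the induced map $\theta^{\oplus n} \colon M^{\oplus n} \to N^{\oplus n}$ then yields $\|(\theta(x_i))\|_{1/2} \le \|(x_i)\|_{1/2}$, and squaring and invoking the norm equality on both sides delivers the desired inequality.
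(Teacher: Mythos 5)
The paper does not actually prove the displayed norm identity: it is quoted from Haagerup's preprint (Theorem 2.1 in \cite{Ha93}, with \cite{Pi95} for the semifinite case), and the only thing proved in the text is the ``in particular'' clause, via the observation that $\theta\otimes\id_n$ is contractive from $(M^{\oplus n},\|\cdot\|_t)$ to $(N^{\oplus n},\|\cdot\|_t)$ at $t=0,1$ and hence at $t=1/2$ by interpolation. Your treatment of that clause is exactly the paper's argument and is correct. Your proof of the upper bound $\|\sum_i x_i\otimes\overline{x_i}\|_{\pi_M}\le\|(x_i)\|_{1/2}^2$ (row--column factorization of $\sum_i x_iJy_iJ$ in both orders, then bilinear Calder\'on interpolation with the second variable conjugated) is also correct and standard; it goes beyond what the paper writes down.

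The genuine problem is your sketch of the reverse inequality. The function $f_i(z)=\sigma^\varphi_{-\ri(z-1/2)}(x_i)$ cannot be the right competitor in the definition of $\|(x_i)\|_{1/2}$: in the tracial case $\sigma^\varphi$ is trivial, so $f$ is the constant function and its boundary norms are exactly $\|(x_i)\|_0$ and $\|(x_i)\|_1$, which in general strictly exceed $C=\|\sum_i x_i\otimes\overline{x_i}\|_{\pi_M}^{1/2}$. Concretely, take $M=M_2(\C)$ with its trace, $x_1=e_{11}$, $x_2=e_{12}$: then $\|(x_i)\|_0^2=\|2e_{11}\|=2$ and $\|(x_i)\|_1^2=1$, while $\pi_M(\sum_i x_i\otimes\overline{x_i})$ is the map $a\mapsto(a_{11}+a_{22})e_{11}$ on Hilbert--Schmidt space, of norm $\sqrt2$; so $C^2=\sqrt2<2=\|f(\ri t)\|_0^2$. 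Thus the obstacle is not a ``technical'' uniform estimate on matrix coefficients---the proposed analytic family is simply not extremal, and no amount of Tomita--Takesaki bookkeeping will rescue it. The actual content of the hard direction (already for finite-dimensional tracial $M$) is an operator-valued factorization $x_i=a\,d_i\,b$ realizing the interpolation norm, or equivalently a duality argument against selfpolar forms/noncommutative $L^1$, which is what \cite{Ha93}, \cite{Pi95}, \cite{JP10} and \cite{Xu05} supply. As written, your argument establishes only one inequality of the stated identity, which is consistent with the paper's decision to cite this half rather than prove it.
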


Note that the second half of Theorem~\ref{thm:haagerup} is 
a trivial consequence of the interpolation theorem, because 
$\theta\otimes\id_n\colon (M^{\oplus n},\|\,\cdot\,\|_{t})
 \to(N^{\oplus n},\|\,\cdot\,\|_{t})$ 
is contractive at $t=0,1$. 
The purpose of this appendix is to give a proof of 
the following corollary to Theorem~\ref{thm:haagerup}.

\begin{cor}[cf.\ Corollary 3.8 in \cite{Ha93}]\label{cor:haagerup}
For von Neumann algebras $N\subset M$, the following are equivalent. 
\begin{enumerate}[$(\rm i)$]
\item\label{corcond1}
There is a conditional expectation of $M$ onto $N$.
\item\label{corcond2}
For every $x_1,\ldots,x_n\in N$,
\[
\|\sum_i x_i \otimes \overline{x_i} \|_{\pi_N}=\|\sum_i x_i \otimes \overline{x_i} \|_{\pi_M}. 
\]
\item\label{corcond3}
${}_N\rL^2(N)_N\prec{}_N\rL^2(M)_N$. Namely, for every $x_i,y_i\in N$, 
\[
\|\sum_i x_i \otimes \overline{y_i} \|_{\pi_N} \le \|\sum_i x_i \otimes \overline{y_i} \|_{\pi_M}. 
\]
\end{enumerate}
\end{cor}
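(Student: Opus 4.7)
The plan is to prove the equivalence around the cycle $(i)\Rightarrow(ii)\Rightarrow(iii)\Rightarrow(i)$. The implication $(i)\Rightarrow(ii)$ is a direct double application of the second half of Theorem~\ref{thm:haagerup}: the inclusion $\iota\colon N\hookrightarrow M$ is unital completely positive, yielding $\|\sum_i x_i\otimes\overline{x_i}\|_{\pi_M}\le\|\sum_i x_i\otimes\overline{x_i}\|_{\pi_N}$ for all $x_i\in N$, while any conditional expectation $E\colon M\to N$ is unital completely positive and fixes $N$, so applying Theorem~\ref{thm:haagerup} to $E$ yields the reverse inequality. The implication $(iii)\Rightarrow(ii)$ is immediate upon specializing $y_i=x_i$ in (iii) and combining with the same universally valid bound from $\iota$.

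The heart of the argument is the implication $(ii)\Rightarrow(iii)$, which I would handle via matrix amplification. Given $x_i,y_i\in N$, the choice $W_i=\begin{pmatrix}0 & x_i \\ y_i & 0\end{pmatrix}\in M_2(N)$ is convenient: a direct computation shows that $\pi_{M_2(M)}(\sum_i W_i\otimes\overline{W_i})$ acts on $M_2(\rL^2(M))$ as a permutation of the four coordinate copies of $\rL^2(M)$, with the four entries being $\pi_M(\sum x_i\otimes\overline{x_i})$, $\pi_M(\sum x_i\otimes\overline{y_i})$, $\pi_M(\sum y_i\otimes\overline{x_i})$, and $\pi_M(\sum y_i\otimes\overline{y_i})$. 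Its norm is thus the maximum of these four, and the same holds in $N$. Hypothesis (ii) for the amplified inclusion $M_2(N)\subset M_2(M)$, combined with the diagonal equalities already provided by (ii) and with the modular-conjugation identity $\|\sum y_i\otimes\overline{x_i}\|_{\pi}=\|\sum x_i\otimes\overline{y_i}\|_{\pi}$ (which follows from $J\pi_M(a\otimes\overline{b})J=\pi_M(b\otimes\overline{a})$), then isolates the desired off-diagonal inequality, possibly after introducing a scaling parameter to ensure the off-diagonal block dominates the diagonal ones. The required amplification of (ii) from $N\subset M$ to $M_2(N)\subset M_2(M)$ is obtained from the first half of Theorem~\ref{thm:haagerup}: the diagonal $\pi$-norm is identified with a complex interpolation norm on a Banach couple whose defining norms $\|\cdot\|_0,\|\cdot\|_1$ are intrinsic to the sequence and hence transfer to matrix amplifications by a standard tensorial argument.

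For $(iii)\Rightarrow(i)$, fix a faithful normal state $\varphi_N$ on $N$ with standard cyclic and separating vector $\xi_N\in\rL^2(N)^+$. The vector state $\omega_{\xi_N}$ on $\pi_N(N\odot N\op)$ is bounded by $\|\cdot\|_{\pi_M}$ by (iii), hence extends to a state $\tilde\omega$ on the $C^*$-closure of $\pi_M(N\odot N\op)\subset\B(\rL^2(M))$ and, via Hahn--Banach, to a state $\phi$ on $\B(\rL^2(M))$. Restricting $\phi$ along $\lambda\colon M\to\B(\rL^2(M))$ produces a state on $M$ extending $\varphi_N$ whose $JNJ$-bimodular behaviour inherited from $\tilde\omega$ is enough to construct, via a standard modular-theoretic argument, the desired conditional expectation $E\colon M\to N$. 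The main obstacle throughout is the passage $(ii)\Rightarrow(iii)$: extracting the off-diagonal weak containment from a norm identity on the ``diagonal'' elements $\sum_i x_i\otimes\overline{x_i}$ relies crucially on the Pisier--Haagerup interpolation formula to amplify (ii) to the matrix-valued inclusion $M_2(N)\subset M_2(M)$, and the delicate bookkeeping of interpolation-of-subspace versus restriction-of-interpolation is where the force of Theorem~\ref{thm:haagerup} is felt most strongly.
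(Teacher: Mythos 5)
Your implications $(\rm i)\Rightarrow(\rm ii)$ and the treatment of $(\rm iii)\Rightarrow(\rm i)$ are consistent with the paper (the latter is simply cited there as Proposition 2.5 of \cite{AD95}, and your Hahn--Banach sketch is in that spirit, though the final ``standard modular-theoretic argument'' is left unexplained). The genuine problem is your $(\rm ii)\Rightarrow(\rm iii)$, which is where all the difficulty of the corollary lives. Your $2\times2$ computation is correct as far as it goes: $\pi(\sum_i W_i\otimes\overline{W_i})$ is a permutation composed with the direct sum of the four block operators, so its norm is the \emph{maximum} of the four block norms. But a maximum of four numbers in which the two diagonal entries are already known to agree between $N$ and $M$ tells you nothing about the off-diagonal entries unless the off-diagonal norm strictly dominates both diagonal ones, and this can never be arranged. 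Indeed, writing $a=\|\sum_i x_i\otimes\overline{x_i}\|_{\pi_N}$, $b=\|\sum_i y_i\otimes\overline{y_i}\|_{\pi_N}$ and $o=\|\sum_i x_i\otimes\overline{y_i}\|_{\pi_N}$, the row--column estimates give $\|\sum_i\lambda(x_i)\rho(\overline{y_i})\|\le\|(x_i)\|_0\|(y_i)\|_1$ and $\le\|(x_i)\|_1\|(y_i)\|_0$, so bilinear interpolation yields $o\le\|(x_i)\|_{1/2}\|(y_i)\|_{1/2}=\sqrt{ab}$ by Theorem~\ref{thm:haagerup}. Consequently, after replacing $x_i$ by $tx_i$ one always has $to\le\sqrt{t^2a\cdot b}\le\max(t^2a,b)$ for every $t>0$: the off-diagonal block is swallowed by the diagonal ones for \emph{every} scaling, and the amplified identity of maxima is vacuous. (Separately, your amplification of $(\rm ii)$ from $N\subset M$ to $M_2(N)\subset M_2(M)$ is not ``standard'': the obstruction is precisely that complex interpolation does not commute with passing to subspaces, so the only clean route to the amplified $(\rm ii)$ is via $(\rm ii)\Rightarrow(\rm i)$, which you cannot assume at this point.)

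The mechanism the paper uses to pass from diagonal information to off-diagonal information is different in kind: one needs a single \emph{linear functional} witnessing all the diagonal values at once, not a family of norm identities. Concretely, applying the Hahn--Banach variant (Lemma~\ref{lem:hb}) to the convex cone $C=\{\sum_i x_i\otimes\overline{x_i}: x_i\in N\}$, the superlinear functional $\sum_i x_i\otimes\overline{x_i}\mapsto\sum_i s_\vp(x_i,x_i)$ and the sublinear functional $\|\pi_M(\,\cdot\,)\|$, condition $(\rm ii)$ produces a state $\psi$ on $\B(\rL^2(M))$ with $s_\vp(x,x)\le\psi(\pi_M(x\otimes\overline{x}))$ for all $x\in N$; Woronowicz's maximality of the selfpolar form (Proposition~\ref{prop:woronowicz}) then forces equality on the diagonal, and \emph{polarization of the two sesquilinear forms} $s_\vp$ and $\psi(\pi_M(\,\cdot\,\otimes\overline{\,\cdot\,}))$ transfers this equality to all matrix coefficients $\langle\pi_N(T)\xi_\vp,\xi_\vp\rangle=\psi(\pi_M(T))$, whence $(\rm iii)$. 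Polarization of genuine sesquilinear forms is what your norm-level argument cannot emulate, and without it the implication $(\rm ii)\Rightarrow(\rm iii)$ remains unproved.
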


The following reformulation of Corollary~\ref{cor:haagerup} 
closes the circle of implications of Propositions 2.5 in \cite{AD95}.

\begin{cor}[cf.\ Proposition 3.6 in \cite{AD95}]\label{cor:anantharaman}
Let ${}_M\cH_N$ be an $M$-$N$ correspondence and 
put $N_1=\B_{N\op}(\cH)$, the commutant of the right $N$-action.
Then the following are equivalent. 
\begin{enumerate}[$(\rm i)$]
\item ${}_M\cH_N$ is left-amenable, i.e.\ ${}_M \rL^2(M)_M\prec{}_M \rL^2(N_1)_M$.
\item There is a conditional expectation of $N_1$ onto $M$.
\end{enumerate}
\end{cor}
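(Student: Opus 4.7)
The plan is to deduce Corollary~\ref{cor:anantharaman} directly from Corollary~\ref{cor:haagerup} via a simple relabeling of the ambient algebras. Given the $M$-$N$ correspondence ${}_M\cH_N$, its left action defines a normal unital $*$-representation of $M$ on $\cH$ that commutes with the right $N\op$-action, so the image of $M$ sits inside $N_1=\B_{N\op}(\cH)$ as a von Neumann subalgebra. Replacing $M$ by this image if necessary (the kernel of the left action affects neither of the two statements of interest), I may assume that $M\subset N_1$ is an honest inclusion of von Neumann algebras.

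Now I apply Corollary~\ref{cor:haagerup} to the inclusion $M\subset N_1$, that is, with ``$N$'' there playing the role of $M$ here and ``$M$'' there playing the role of $N_1$ here. Under this relabeling, condition (i) of Corollary~\ref{cor:haagerup} reads: there is a conditional expectation of $N_1$ onto $M$, which is precisely condition (ii) of Corollary~\ref{cor:anantharaman}. Condition (iii) of Corollary~\ref{cor:haagerup} reads: ${}_M\rL^2(M)_M\prec{}_M\rL^2(N_1)_M$, which is precisely condition (i) of Corollary~\ref{cor:anantharaman}. Their equivalence is therefore an immediate consequence of Corollary~\ref{cor:haagerup}.

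Since Corollary~\ref{cor:haagerup} may be invoked as a black box, there is no further obstacle; the only point worth noting is that $N_1=\rho_\cH(N\op)'$ is the commutant of a normal representation, hence a von Neumann algebra, so its standard form $\rL^2(N_1)$ and the weak containment appearing in condition (i) of Corollary~\ref{cor:anantharaman} are well-defined. For context, were one to reprove Corollary~\ref{cor:haagerup} itself, the hard direction would be (ii)$\Rightarrow$(i): Theorem~\ref{thm:haagerup} (Pisier--Haagerup) would be used to recast the equal-norm condition (ii) as the statement that the inclusion $N\hookrightarrow M$ is isometric for the $\|\cdot\|_{1/2}$ interpolation norm on tuples, and a Hahn--Banach / weak-$*$ compactness argument would then extract a conditional expectation $M\to N$. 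The easy implications (i)$\Rightarrow$(iii)$\Rightarrow$(ii) would follow from the contractivity statement in the second half of Theorem~\ref{thm:haagerup} applied to $\theta=\rE_N$, combined with a trivial specialization.
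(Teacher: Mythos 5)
Your proposal is correct and is essentially the paper's own (implicit) argument: the authors present Corollary~\ref{cor:anantharaman} as a direct reformulation of Corollary~\ref{cor:haagerup} applied to the inclusion $M\subset N_1=\B_{N\op}(\cH)$, with condition (iii) there becoming ${}_M\rL^2(M)_M\prec{}_M\rL^2(N_1)_M$ and condition (i) there becoming the existence of a conditional expectation of $N_1$ onto $M$. Your remark about passing to the image of $M$ in $N_1$ is the only point needing care (a non-faithful left action makes both conditions fail or degenerate), and your treatment of it is adequate.
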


We recall the notion of the \emph{selfpolar forms} (\cite{Co74b,Wo74}), 
which plays a central role in \cite{Ha93} as well 
as in the proof of Corollary~\ref{cor:haagerup}.
Associated with any normal state $\vp$ on $M$, there 
are a canonical unit vector $\xi_\vp$ in $\rL^2(M)^+$ 
and the selfpolar form $s_\vp\colon M\times M\to\IC$ given by 
\[
s_\vp(x,y)=\ip{\pi_M(x \otimes \overline{y})\xi_\vp,\xi_\vp}_{\rL^2(M)}.
\]

\begin{prop}[Theorems 1.1 and 2.1 in \cite{Wo74}]\label{prop:woronowicz}
The selfpolar form $s_\vp$ satisfies the following properties. 
\begin{enumerate}[$(\rm i)$]
\item
$s_\vp$ is a positive semi-definite sesqui-linear form on $M\times M$,
\item
$s_\vp\geq0$ on $M_+\times M_+$,
\item
$s_\vp(x,1)=\vp(x)$ for $x\in M$, and
\item 
$s_{\vp}$ is selfpolar.
\end{enumerate}
Moreover, if $s$ is any sesqui-linear form on $M$ which satisfies 
(i) - (iii) above (with $s_\vp$ exchanged by $s$), 
then $s(x,x)\le s_\vp(x,x)$ for every $x\in M$.
\end{prop}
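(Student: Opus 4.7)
My plan rests on the modular identity
$$
s_\vp(x,y) \;=\; \langle \Delta^{1/4} x \xi_\vp,\, \Delta^{1/4} y \xi_\vp \rangle_{\rL^2(M)},
$$
where $\Delta$ is the modular operator of $\vp$ (after first reducing to the case of faithful $\vp$ by compressing to the support projection of $\vp$ in $M$). I would derive this by unfolding the right action through $\xi_\vp y = J_M y^* J_M \xi_\vp$, using $J_M\xi_\vp = \xi_\vp$, and applying Tomita's relation $x^* \xi_\vp = J_M \Delta^{1/2} x \xi_\vp$. Granting the formula, property (i) reduces to $s_\vp(x,x) = \|\Delta^{1/4} x \xi_\vp\|^2 \geq 0$, which is manifestly sesquilinear and positive semi-definite, while (iii) follows from $\Delta^{1/4}\xi_\vp = \xi_\vp$: $s_\vp(x,1) = \langle x\xi_\vp, \xi_\vp\rangle = \vp(x)$.

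For (ii) I would bypass Tomita altogether. Writing $x = a^*a$ and $y = b^*b$ with $a,b \in M$, pushing $a^*$ through the inner product on the left slot and the right action of $b$ through on the right slot gives
$$
s_\vp(a^*a, b^*b) \;=\; \langle a^* a\, \xi_\vp\, b^* b,\, \xi_\vp\rangle \;=\; \| a \xi_\vp b^*\|_{\rL^2(M)}^2 \;\geq\; 0.
$$
For (iv) I would invoke the Araki--Connes--Haagerup description $\rL^2(M)^+ = \overline{\{\Delta^{1/4} x\xi_\vp : x \in M_+\}}$ of the natural positive cone, a standard feature of the Haagerup standard form. The assignment $[x] \mapsto \Delta^{1/4} x \xi_\vp$ descends to an isometric embedding of $H_{s_\vp}$, the completion of $M/\ker s_\vp$, into $\rL^2(M)$, and carries the closed cone generated by $M_+$ onto a dense subset of $\rL^2(M)^+$. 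Since $\rL^2(M)^+$ is selfdual inside $\rL^2(M)$, selfpolarity of $s_\vp$ follows.

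The main obstacle is the maximality clause. Given any sesquilinear form $s$ satisfying (i)--(iii), my plan is to represent it as $s(x,y) = \langle T \Delta^{1/4} x\xi_\vp,\, \Delta^{1/4} y\xi_\vp\rangle$ for some bounded positive self-adjoint operator $T$ on $\rL^2(M)$. Specialization of (iii) at $y = 1$ then forces $T \xi_\vp = \xi_\vp$, while (ii) combined with the selfduality of $\rL^2(M)^+$ forces $T(\rL^2(M)^+) \subseteq \rL^2(M)^+$. A cone-theoretic argument in the spirit of Perron--Frobenius---a self-adjoint operator preserving a selfdual cone and fixing a cyclic vector in that cone is a contraction---then delivers $T \leq 1$, which yields $s(x,x) = \langle T\eta,\eta\rangle \leq \|\eta\|^2 = s_\vp(x,x)$ for $\eta = \Delta^{1/4} x\xi_\vp$. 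The delicate step is the a priori bound $|s(x,y)|^2 \leq C\, s_\vp(x,x)\, s_\vp(y,y)$ needed to legitimately define $T$ via Riesz representation; I would attempt this through Cauchy--Schwarz for $s$ combined with the normalization $s(1,1) = \vp(1)$, bootstrapping through positive elements and density; failing that, I would fall back on Woronowicz's original duality argument realizing $s_\vp$ as the supremum of all admissible forms.
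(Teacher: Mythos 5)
First, a structural point: the paper does not prove this proposition at all --- it is quoted from Woronowicz \cite{Wo74} (Theorems 1.1 and 2.1), with the identification of $s_\vp$ as the selfpolar form going back to \cite{Ha93} --- so there is no internal proof to compare yours against, and I can only judge the proposal on its merits. Your treatment of (i)--(iii) is correct: the identity $s_\vp(x,y)=\langle\Delta^{1/4}x\xi_\vp,\Delta^{1/4}y\xi_\vp\rangle$ is the standard description of this form for faithful $\vp$, properties (i) and (iii) follow at once, and your direct computation $s_\vp(a^*a,b^*b)=\|a\xi_\vp b^*\|^2$ for (ii) is clean and bypasses modular theory entirely. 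The sketch of (iv) via the Araki--Connes--Haagerup description of $\rL^2(M)^+$ and its selfduality is the standard route (and the paper explicitly never uses (iv)).

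The maximality clause --- the only part of the proposition the paper actually invokes, in the proof of Corollary~\ref{cor:haagerup} --- is where the argument has genuine gaps, one of which you have located yourself. (a) To define $T$ by Riesz representation you need an a priori bound $s(x,x)\le C\,s_\vp(x,x)$. Cauchy--Schwarz for $s$ only bounds $s$ by itself; the best one extracts from (ii)+(iii) is $s(x,x)\le s(x,1)=\vp(x)$ for $0\le x\le1$, hence $s(x,x)\le\|x\|\,\vp(x)$ for $x\in M_+$, and since $\vp(x)=s_\vp(x,1)\le s_\vp(x,x)^{1/2}s_\vp(1,1)^{1/2}$ this yields only $s(x,x)\le\|x\|\,s_\vp(x,x)^{1/2}$, which does not dominate $s_\vp(x,x)$ up to a constant (take $x$ of fixed norm with $s_\vp(x,x)\to0$). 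So $T$ is not legitimately defined, and this is not a technicality: domination of $s$ by $s_\vp$ is precisely the conclusion being proved. (b) Even granting $T$, the asserted Perron--Frobenius principle --- a positive self-adjoint operator preserving a selfdual cone and fixing a cyclic vector in it is a contraction --- is itself a nontrivial theorem that you do not prove. Already for $P=\ell^2_+$ it says that a symmetric positivity-preserving matrix fixing a strictly positive vector has norm at most $1$; the proof there passes through the kernel representation and the $L^1$--$L^\infty$ interpolation argument for symmetric Markov operators, and no kernel is available for an abstract selfdual cone. In the standard-form setting this contractivity statement is of essentially the same depth as the maximality clause itself (it is the $L^2$-contractivity of Markovian maps, which in \cite{Ha93} is ultimately handled by the interpolation result recorded as Theorem~\ref{thm:haagerup}). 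Your declared fallback to ``Woronowicz's original duality argument'' is, in effect, what the paper does by citing \cite{Wo74}; as written, the hard half of the proposition has been reduced to two other unproven claims of comparable difficulty.
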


We do not introduce the selfpolar property (iv), 
since we will not use it (in an explicit way). 
We use the following well-known variant of the Hahn--Banach theorem. 

\begin{lem}\label{lem:hb}
Let $C$ be a convex cone of a real vector space $V$, 
$p\colon V\to\R$ a sublinear function, and $q\colon C\to\R$ a superlinear function.
This means that $p(x+y)\le p(x)+p(y)$ and 
$p(\lambda x)=\lambda p(x)$ for every $x,y\in V$ and $\lambda\in\R_{\geq0}$; 
and that $-q$ is sublinear. 
If $q\le p$ on $C$, then there is a linear functional $\psi$ on $V$ 
such that $\psi\le p$ on $V$ and $q\le\psi$ on $C$. 
\end{lem}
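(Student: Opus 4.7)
The plan is to reduce the statement to the classical Hahn--Banach extension theorem by constructing a single sublinear functional $\tilde p$ on $V$ that simultaneously encodes both desired inequalities. Concretely, I would set
\[
\tilde p(x) := \inf_{c \in C}\bigl(p(x+c) - q(c)\bigr), \qquad x \in V.
\]
The whole strategy rests on showing that $\tilde p$ is everywhere finite, sublinear, dominated by $p$ on $V$, and bounded above by $-q(c)$ at $-c$ for every $c \in C$. Then the classical Hahn--Banach theorem, applied to the zero functional on the trivial subspace $\{0\}$, produces a linear $\psi \le \tilde p$, and the two required estimates are read off directly.

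The first task is to verify finiteness. Choosing $c = 0 \in C$ (valid since $C$ is a convex cone, and noting that $p(0) = q(0) = 0$ by positive homogeneity with $\lambda = 0$) gives the upper bound $\tilde p(x) \le p(x)$. For the lower bound, subadditivity of $p$ in the form $p(c) \le p(-x) + p(x+c)$, combined with the hypothesis $q \le p$ on $C$, yields
\[
p(x+c) - q(c) \;\ge\; p(x+c) - p(c) \;\ge\; -p(-x),
\]
uniformly in $c \in C$, so $\tilde p(x) \ge -p(-x) > -\infty$.

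Next I would verify that $\tilde p$ is sublinear. Positive homogeneity follows because $\lambda C = C$ for every $\lambda > 0$ and because $p$ and $q$ are both positively homogeneous. For subadditivity, I exploit that $C$ is closed under addition and that $q$ is superadditive (since $-q$ is sublinear): for $c_1, c_2 \in C$,
\[
\tilde p(x+y) \le p(x+y+c_1+c_2) - q(c_1+c_2) \le \bigl(p(x+c_1) - q(c_1)\bigr) + \bigl(p(y+c_2) - q(c_2)\bigr),
\]
and taking the infimum over $c_1,c_2$ gives $\tilde p(x+y) \le \tilde p(x) + \tilde p(y)$. Classical Hahn--Banach now produces a linear $\psi$ on $V$ with $\psi \le \tilde p$, hence $\psi \le p$ on $V$. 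Finally, for any $c \in C$, the choice $c' = c$ in the infimum defining $\tilde p(-c)$ gives $\tilde p(-c) \le p(0) - q(c) = -q(c)$, so $-\psi(c) = \psi(-c) \le -q(c)$, i.e.\ $\psi(c) \ge q(c)$.

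The main technical obstacle is establishing that $\tilde p(x) > -\infty$: this is not immediate from sublinearity of $p$ alone and essentially forces one to use the domination hypothesis $q \le p$ on $C$ in an essential way. Once finiteness is in hand, the remaining verifications are routine manipulations of sublinearity and of the cone structure of $C$, and the conclusion follows by a single application of the classical Hahn--Banach theorem.
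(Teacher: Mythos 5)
Your proof is correct and follows essentially the same route as the paper: the auxiliary functional $\tilde p(x)=\inf_{c\in C}\bigl(p(x+c)-q(c)\bigr)$ is exactly the function $r$ used there, with the same bounds $-p(-x)\le \tilde p(x)\le p(x)$ and the same final evaluation at $-c$. You have merely written out the routine verifications (finiteness, sublinearity) that the paper leaves implicit.
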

\begin{proof}
Observe that $r(x):=\inf\{ p(x+y)-q(y) : y\in C\}$ is a sublinear function on $V$ 
such that $-p(-x) \le r(x) \le p(x)$ for $x\in V$ 
(in particular $r$ takes values in $\R$).
By the Hahn--Banach theorem, there is a linear functional $\psi$ on  $V$ 
such that $\psi\le r$. 
One has $-\psi(x)\le r(-x)\le -q(x)$ for $x\in C$. 
\end{proof}

\begin{proof}[Proof of Corollary~\ref{cor:haagerup}]
The equivalence $({\rm\ref{corcond1}})\Leftrightarrow({\rm\ref{corcond2}})$ 
is proved in Corollary 3.8 in \cite{Ha93}.
That $({\rm\ref{corcond1}})\Rightarrow({\rm\ref{corcond2}})$ follows from 
Theorem~\ref{thm:haagerup}. 
That $({\rm\ref{corcond3}})\Rightarrow({\rm\ref{corcond1}})$ is proved in 
Proposition 2.5 in \cite{AD95}.
Thus it is left to show $({\rm\ref{corcond2}})\Rightarrow({\rm\ref{corcond3}})$. 
We closely follow the proof of Corollary 3.8 (Theorem 3.7) in \cite{Ha93}.
Take any normal state $\vp$ on $N$ and consider the convex cone 
$C:=\{\sum_i x_i\otimes\overline{x_i} : x_i \in N \}\subset M\odot M\op$ 
and the seminorm $p(\,\cdot\,)=\|\pi_M(\,\cdot\,)\|_{\B((\rL^2(M))}$ on $M\odot M\op$. 
Since 
\[
\sum_i s_{\vp}(x_i,x_i) 
 = \ip{\pi_N(\sum_i x_i\otimes\overline{x_i})\xi_\vp,\xi_\vp}
 \le p(\sum_i x_i\otimes\overline{x_i})
\]
by condition (\ref{corcond2}), 
Lemma~\ref{lem:hb} yields $\psi\in \B(\rL^2(M))^*$ of norm $1$ such that 
$s_{\vp}(x,x)\le \Re\psi(\pi_M(x \otimes \overline{x}))$ for $x\in N$.
Observe that $\|\psi\|\le1$ and $1\le\Re\psi(1)$ imply that $\psi$ is a state. 
The state $\psi^*$ on $\B(\rL^2(M))$ defined by $\psi^*(T)=\psi(J_M T^* J_M)$
satisfies $\psi^*(\pi_M(x \otimes \overline{y}))=\overline{\psi(\pi_M(y \otimes \overline{x}))}$ 
for every $x,y\in M$. 
Thus by replacing $\psi$ with the state $\frac{1}{2}(\psi+\psi^*)$, we may 
assume that $s\colon M\times M\ni (x,y)\mapsto\psi(\pi_M(x \otimes \overline{y}))$ is a sesqui-linear 
form such that $s_{\vp}(x,x) \le s(x,x)$ for every $x\in N$.
Since $s_{\vp}(1+\lambda x,1+\lambda x) \le s(1+\lambda x,1+\lambda x)$ 
for all $\lambda\in\R$ and a fixed $x\in N$, 
one sees $\vp(x)=s(x,1)$ for $x\in N$. 
It follows from Proposition~\ref{prop:woronowicz} 
that $s_{\vp}(x,x)=s(x,x)$ for every $x\in N$.
Hence by polarization identity, $s_{\vp}=s$ and so for every $T=\sum_i x_i \otimes \overline{y_i} \in N \odot N\op$ we have
\[
\langle \pi_N(T) \xi_\varphi,\xi_\varphi \rangle= \sum_i s_\vp(x_i,y_i)=\sum_i s(x_i,y_i) =\psi(\pi_M(T)).
\]
Since $\{ \xi_\vp\}_\vp$ is a $\pi_N$-cyclic family, 
one obtains $$\| \pi_N(T) \|_{\B(\rL^2(N))} \leq \| \pi_M(T) \|_{\B(\rL^2(M))}$$ 
which is precisely condition (\ref{corcond3}). 
\end{proof}

\bibliographystyle{plain}

\end{document}